\newtheorem{theorem}{Theorem}[section]
\newtheorem{lemma}[theorem]{Lemma}
\newtheorem{proposition}[theorem]{Proposition}
\theoremstyle{definition}
\newtheorem{remark}[theorem]{Remark}
\numberwithin{equation}{section}
\begin{document}

\title{On continuation properties after blow-up time for $L^2$-critical gKdV equations}\def\rightmark{CONTINUATION AFTER BLOW-UP TIME}

\author{Yang Lan}

\address{Department of Mathematics and Computer Science, University of Basel, Spiegelgasse 1, CH-4051 Basel, Switzerland}

\email{yang.lan@unibas.ch}

\keywords{gKdV, $L^2$-critical, blow-up, continuation after blow-up}

\subjclass[2010]{Primary 35Q53; Secondary 35B40, 35B44, 35B60}

\begin{abstract}
In this paper, we consider a blow-up solution $u(t)$ (close to the soliton manifold) to the $L^2$-critical gKdV equation $\partial_tu+(u_{xx}+u^5)_x=0$, with finite blow-up time $T<+\infty$. We expect to construct a natural extension of $u(t)$ after the blow-up time. To do this, we consider the solution $u_{\gamma}(t)$ to the saturated $L^2$-critical gKdV equation $\partial_tu+(u_{xx}+u^5-\gamma u|u|^{q-1})_x=0$ with the same initial data, where $\gamma>0$ and $q>5$. A standard argument shows that $u_{\gamma}(t)$ is always global in time. Moreover, for all $t<T$, $u_{\gamma}(t)$ converges to $u(t)$ in $H^1$ as $\gamma\rightarrow0$. We prove in this paper that for all $t\geq T$, $u_{\gamma}(t)\rightarrow v(t)$ as $\gamma\rightarrow0$, in a certain sense. This limiting function $v(t)$ is a weak solution to the unperturbed $L^2$-critical gKdV equations, hence can be viewed as a natural extension of $u(t)$ after the blow-up time.
\end{abstract}

\maketitle

\section{Introduction}
\subsection{Setting of the problem}\label{S11}
In this paper, we consider the $L^2$ critical gKdV equation:
\begin{equation}
\label{CP}\tag{gKdV}
\begin{cases}
\partial_tu+u_{xxx}+(u^5)_x=0,\;(t,x)\in\mathbb{R}\times\mathbb{R},\\
u(0)=u_0\in H^1(\mathbb{R}).
\end{cases}
\end{equation}

From Kato \cite{Ka} and Kenig-Ponce-Vega \cite{KPV}, the Cauchy problem \eqref{CP} is locally well-posed in $H^1$: for all $u_0\in H^1$, there is a unique strong solution $u(t,x)\in \mathcal{C}([0,T),H^1)$ to \eqref{CP}, where $T$ is the maximal lifespan of the solution. Moreover, we have the following blow-up criterion: either $T=+\infty$ or $T<+\infty$ and 
\begin{equation}
\lim_{t\rightarrow T}\|u(t)\|_{H^1}=+\infty.
\end{equation}

As a universal Hamiltonian model, the gKdV equation has two conservation laws, the mass and the energy:
\begin{align}
&M(u(t))=\int|u(t)|^2=M_0,\label{CL1}\\
&E(u(t))=\frac{1}{2}\int|u_x(t)|^2-\frac{1}{6}\int|u(t)|^6=E_0.\label{CL2}
\end{align}

There is a scaling symmetry for \eqref{CP}: for all $\lambda>0$, if $u(t,x)$ is a solution to \eqref{CP}, then so is 
\begin{equation}\label{scaling}
u_{\lambda}(t,x)=\frac{1}{\lambda^{1/2}}u\bigg(\frac{t}{\lambda^3},\frac{x}{\lambda}\bigg).
\end{equation}
The Cauchy problem \eqref{CP} is called $L^2$ critical, since the scaling symmetry \eqref{scaling} leaves the $L^2$ norm of the initial data invariant, $i.e.$ $\|u_{\lambda}(0)\|_{L^2}=\|u(0)\|_{L^2}$ for all $\lambda>0$. 

There is a special class of solutions, called the \emph{soliton solutions} (or solitary waves, traveling waves, $etc.$). They are given by
\begin{equation}
u(t,x)=Q(x-t),
\end{equation}
with
\begin{equation}
Q(x)=\bigg(\frac{3}{\cosh^2(2x)}\bigg)^{\frac{1}{4}}.
\end{equation}
Here the function $Q$ is also called the \emph{ground state}. It is the unique nonnegative, radial solution with exponential decay to the following ODE:
\begin{equation}
Q''-Q+Q^5=0.
\end{equation}

From Weinstein \cite{W1}, the ground state $Q$ satisfies the sharp Gagliardo-Nirenberg's inequality:
\begin{equation}
\forall\, v\in H^1,\quad \int v^6\leq 3\int v_x^2\bigg(\frac{\int v^2}{\int Q^2}\bigg)^2.
\end{equation}
Hence, for all $u_0\in H^1$ with $\|u_0\|_{L^2}<\|Q\|_{L^2}$, the corresponding solution is always global in time and bounded in the energy space $H^1$.

\subsection{Overview of the blow-up dynamics for $L^2$ critical gKdV equations}
\subsubsection{Blow-up dynamics for solutions with slightly supercritical mass}
For $u_0\in H^1$ with $\|u_0\|_{L^2}\geq \|Q\|_{L^2}$, blow-up may occur. In a series of work \cite{MM1,MM2,MM4,MM5,MM3}, \cite{M1}, Martel and Merle obtained the first qualitative results for solution with slightly supercritical mass: $\|Q\|_{L^2}<\|u_0\|_{L^2}<\|Q\|_{L^2}+\alpha^*$, $0<\alpha^*\ll1$. In particular, they proved the existence of solutions blowing up in finite time with negative energy, and the ground state $Q$ is the universal blow-up profile for all $H^1$ blow-up solutions in this regime.

\subsubsection{Classification of the flow near the ground state}
In recent works \cite{MMR1,MMR2}, Martel, Merle and Rapha\"el gave a specific description of the flow near the ground state.

More precisely, for all $0<\alpha_0\ll\alpha^*\ll1$, we let 
\begin{align}
&\mathcal{A}_{\alpha_0}=\bigg\{u_0=Q+\varepsilon_0\;\Big|\|\varepsilon_0\|_{L^2}<\alpha_0,\;\int_{y>0}y^{10}\varepsilon^2_0(y)\,dy<1\bigg\},\\
&\mathcal{T}_{\alpha^*}=\bigg\{u_0\in L^2\Big|\inf_{\lambda_0>0,\,x_0\in \mathbb{R}}\bigg\|u_0(\cdot)-\frac{1}{\lambda_0^{1/2}}Q\bigg(\frac{\cdot-x_0}{\lambda_0}\bigg)\bigg\|_{L^2}<\alpha^*\bigg\}.
\end{align}
Then we have

\begin{theorem}[{Rigidity of the dynamics in $\mathcal{A}_{\alpha_0}$, Theorem 1.1 \& 1.2 in \cite{MMR1}}]\label{T1}
For all $0<\alpha_0\ll\alpha^*\ll1$, and $u_0\in\mathcal{A}_{\alpha_0}$, let $u(t)$ be the corresponding solution to \eqref{CP}, and $0<T\leq+\infty$ be the maximal lifespan. Then one and only one of the following scenarios occurs:
\begin{itemize}
\item{\rm\bf(Blow up):} The solution $u(t)$ blows up in finite time $0<T<+\infty$ with
$$\|u(t)\|_{H^1}=\frac{\ell_0+o(1)}{T-t},\quad \ell_0>0,$$
and for all $0\leq t<T$, $u(t)\in\mathcal{T}_{\alpha^*}$. 

In addition, there exist $\lambda(t)>0$, $x(t)\in \mathbb{R}$ and $u^*\in H^1$, $u^*\not=0$, such that
\begin{equation}
\label{converge}
u(t,\cdot)-\frac{1}{\lambda^{1/2}(t)}Q\bigg(\frac{\cdot-x(t)}{\lambda(t)}\bigg)\rightarrow u^*\;\text{in }L^2,\;\; \text{as } t\rightarrow T,
\end{equation}
with
\begin{equation}
\label{14}
\lim_{t\rightarrow T}\frac{\lambda(t)}{T-t}=\ell_0>0,\quad \lim_{t\rightarrow T}(T-t)x(t)=\ell_0^{-2}.
\end{equation}
\item{\rm\bf(Soliton):} The solution is global, and for all $0\leq t<T=+\infty$, $u(t)\in\mathcal{T}_{\alpha^*}$. In addition, there exist a constant $\lambda_{0}>0$ and a $C^1$ function $x(t)$ such that
\begin{gather*}
\lambda_{0}^{\frac{1}{2}}u(t,\lambda_{0}\cdot+x(t))\rightarrow Q\text{ in }H^{1}_{{\rm loc}},\text{ as }t\rightarrow+\infty,\\
|\lambda_{0}-1|\lesssim\delta(\alpha_0),\quad x(t)\sim\frac{t}{\lambda_{0}^2},\text{ as }t\rightarrow+\infty.
\end{gather*}
\item{\rm\bf(Exit):} For some finite time $0<t^*<T$, $u(t^*)\notin\mathcal{T}_{\alpha^*}$.
\end{itemize}

Moreover, all of the three scenarios are possible to occur and the scenarios (Blow up) and (Exit) are stable by small perturbation in $\mathcal{A}_{\alpha_0}$.
\end{theorem}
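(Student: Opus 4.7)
The plan is to describe $u(t)$ as a modulated ground state plus a small error, derive sharp modulation equations, and then classify long-time behavior through a mixed energy-virial Lyapunov functional coupled with a projection onto the unique unstable direction of the linearized flow.

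First, as long as $u(t)\in\mathcal{T}_{\alpha^*}$, I would use the implicit function theorem to produce $C^1$ parameters $\lambda(t)>0$ and $x(t)\in\mathbb{R}$ such that
$$u(t,x)=\frac{1}{\lambda(t)^{1/2}}\bigl[Q+\varepsilon(t)\bigr]\!\left(\frac{x-x(t)}{\lambda(t)}\right),$$
where $\varepsilon(t)$ satisfies two orthogonality conditions chosen to eliminate the kernel directions of the linearized operator $L=-\partial_y^2+1-5Q^4$ (typically orthogonality to $yQ$ and to $\Lambda Q=\tfrac{1}{2}Q+yQ_y$, with adjustments). Plugging into \eqref{CP} and using the orthogonality conditions gives modulation identities of the form $|\lambda_t/\lambda|+|x_t/\lambda-1|\lesssim\|\varepsilon\|_{L^2_{\rm loc}}$ plus explicit leading-order terms, which will be the engine of the classification.

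Second, I would build a mixed energy-virial functional $\mathcal{F}(\varepsilon)$ that is essentially equivalent to $\|\varepsilon\|_{H^1}^2$ modulo the unstable mode, and prove a monotonicity bound of the form $\tfrac{d}{dt}\bigl(\lambda^{-2}\mathcal{F}(\varepsilon)\bigr)\lesssim \lambda^{-4}\|\varepsilon\|^2_{L^2_{\rm loc}}+\text{manageable errors}$. The key coercivity input comes from the spectral structure of $L$: under the imposed orthogonality conditions and away from the single unstable direction, the linearized quadratic form controls $\|\varepsilon\|_{H^1}^2$. Combined with mass and energy conservation \eqref{CL1}--\eqref{CL2}, this controls the evolution of $\varepsilon$ over arbitrarily long time windows.

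Third, I would track the scalar quantity $\beta(t)$ measuring the projection of $\varepsilon(t)$ onto the unstable direction and run a trichotomy. If $\beta(t)$ stays positive, the modulation equation for $\lambda$ forces $-\lambda_t\to\ell_0>0$, so $\lambda(t)\sim\ell_0(T-t)$ yields the blow-up rate $\|u(t)\|_{H^1}\sim\ell_0/(T-t)$; the sharp law $(T-t)x(t)\to\ell_0^{-2}$ would then follow by integrating $x_t\sim\lambda^{-2}$. The $L^2$ convergence to a residual $u^*\neq 0$ would come from mass conservation together with a local smoothing / propagation estimate showing that the dispersive part of $u$ stabilizes outside the concentration point. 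If $\beta(t)$ stays near zero, a codimension-one construction produces the Soliton regime. If $\beta(t)$ becomes negative and grows, the modulation equations eject $\varepsilon$ from the tube and give Exit in finite time. Stability of the Blow up and Exit scenarios follows from the open nature of the sign conditions on $\beta$.

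The main obstacle is the construction of the mixed energy-virial functional: the $L^2$-critical exponent is exactly the threshold where the standard virial identity fails to be coercive, so $\mathcal{F}$ must be engineered with carefully tuned weights to absorb the borderline term while remaining compatible with the modulation equations. A secondary difficulty is the $L^2$ convergence to a nontrivial $u^*$, which requires excluding all the mass from escaping into the concentrating bubble and demands a delicate no-return estimate for $\varepsilon$ outside a shrinking neighborhood of $x(t)$; obtaining the sharp translation law $(T-t)x(t)\to\ell_0^{-2}$ similarly requires integrating the modulation identity for $x(t)$ with errors that are summable up to $T$, which is possible only because of the precise rate $\lambda(t)\sim\ell_0(T-t)$.
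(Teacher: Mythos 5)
This statement is quoted from Martel--Merle--Rapha\"el \cite{MMR1} (Theorems 1.1 and 1.2 there); the present paper does not reprove it, but Section \ref{S2} recalls the machinery of the actual proof, and your sketch can be measured against that. Your overall architecture (modulation around the soliton, a mixed energy--virial Lyapunov functional, a trichotomy driven by a scalar parameter) is the right skeleton, but two ingredients that are essential to this particular theorem are missing. First, decomposing as $u=\lambda^{-1/2}[Q+\varepsilon](\cdot)$ with two orthogonality conditions is not enough to reach the sharp laws \eqref{14}. The proof uses the refined profile $V_b=Q+b\chi(|b|^{\beta}y)P$ from the nonlinear profile construction (recalled in Section \ref{S21}), a \emph{third} modulation parameter $b$ with a third orthogonality condition \eqref{OC}, and the resulting system $\lambda_s/\lambda+b\approx 0$, $b_s+2b^2\approx 0$, together with the almost-conservation of $b/\lambda^2$ (see \eqref{LOL}--\eqref{LOBOLS} and their $\gamma=0$ analogues). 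Your ``projection onto the unstable direction'' $\beta(t)$ is morally this $b$, but without the corrector $P$ the modulation equation for $\beta$ carries error terms that are only $O(\|\varepsilon\|_{L^2_{\rm loc}})$ rather than quadratic, and these are not summable up to $T$; you cannot then conclude $-\lambda_t\to\ell_0$ nor the translation law.

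Second, and more decisively, your sketch never uses the decay assumption $\int_{y>0}y^{10}\varepsilon_0^2<1$ built into $\mathcal{A}_{\alpha_0}$, yet the theorem is \emph{false} without it: as recalled in Remark 1.2 of this paper, \cite{MMR3} constructs $H^1$ blow-up solutions with exotic rates whose data have slowly decaying tails on the right. The place where this hypothesis enters is precisely the monotonicity formula: for gKdV the radiation is ejected to the \emph{left} of the soliton, and the Lyapunov functionals $\mathcal{F}_{i,j}$ are built with the asymmetric weights $\varphi_{i,B}$, $\psi_B$ (polynomial growth to the right, exponential decay to the left) so that the transport term in $\tfrac{d}{ds}\mathcal{F}$ has a favorable sign. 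A generic ``carefully tuned'' virial weight, as you propose, will not produce the coercive dissipation term $\int((\partial_y\varepsilon)^2+\varepsilon^2)\varphi'_{i,B}$ of \eqref{MF1}--\eqref{MF2} unless it exploits this left--right asymmetry, and controlling the weighted norms $\mathcal{N}_i$ at the initial time is exactly what the right-decay assumption buys. Until your functional $\mathcal{F}$ is specified in a way that uses both the asymmetry of the KdV dispersion and the decay of the data on the right, the central monotonicity step of your argument is not available.
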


\begin{remark}
The decay assumption on the right of the initial data in the definition of $\mathcal{A}_{\alpha_0}$ is important. Indeed, in \cite{MMR3}, Martel, Merle and Rapha\"el constructed $H^1$ blow-up solutions with exotic blow-up rate, where the initial data has a slowly decaying tail on the right.
\end{remark}

\begin{remark}
In \cite{MMR2}, Martel, Merle and Rapha\"el proved the existence and uniqueness of the minimal mass blow-up solution $S(t)$ with $\|S(t)\|_{L^2}=\|Q\|_{L^2}$. They also proved that solutions in the (Exit) case have a universal behavior at the exit time, related to the minimal mass blow up solution $S(t)$. Solutions in this regime are also expected to scatter at $+\infty$. However, it still remains open. 
\end{remark}

\begin{remark}
Recall that in \cite{MMNR}, Martel, Merle, Nakanishi and Rapha\"el proved that the initial data set corresponding to the (Soliton) regime is a codimension one threshold manifold in a small neighborhood of the ground state between the two stable regimes.
\end{remark}

\subsection{The $L^2$-critical gKdV with a saturated perturbation}\label{S13}
Let us recall some results about the saturated problem of $L^2$-critical gKdV:
\begin{equation}\label{CPG}\tag{gKdV\textsubscript{$\gamma$}}
\begin{cases}
\partial_t u +(u_{xx}+u^5-\gamma u|u|^{q-1})_x=0, \quad (t,x)\in[0,T)\times\mathbb{R},\\
u(0,x)=u_0(x)\in H^1(\mathbb{R}),
\end{cases}
\end{equation}
with $q>5$ and $0<\gamma\ll1$.

This equation also has two conservation laws, the mass and the energy:
\begin{gather*}
M(u(t))=\int u(t)^2=M_0,\\
E^{\gamma}(u(t))=\frac{1}{2}\int u_x(t)^2-\frac{1}{6}\int u(t)^6+\frac{\gamma}{q+1}\int |u(t)|^{q+1}=E^{\gamma}_0.
\end{gather*}
 
From the local wellposedness result obtained in \cite{KPV} and the two conservation laws above, we know that the solution of \eqref{CPG} is always global in time and bounded in $H^1$, and for all $t\in[0,+\infty)$, we have
$$\|u_x(t)\|^2_{L^2}\lesssim |E_0^{\gamma}|+\gamma^{-\frac{4}{q-5}}M_0<+\infty.$$

This equation does not have a standard scaling rule, but has the following pseudo-scaling rule: for all $\lambda_0>0$, if $u(t,x)$ is a solution to \eqref{CPG}, then
\begin{equation}\
u_{\lambda_0}(t,x)=\lambda_0^{-\frac{1}{2}}u(\lambda_0^{-3}t,\lambda_0^{-1}x),
\end{equation}
is a solution to
\begin{equation*}\begin{cases}
\partial_t v +(v_{xx}+v^5-\lambda_0^{-m}\gamma v|v|^{q-1})_x=0, \quad (t,x)\in[0,\lambda_0^{-3}T)\times\mathbb{R},\\
v(0,x)=\lambda_0^{-\frac{1}{2}}u_0(\lambda_0^{-1}x)\in H^1(\mathbb{R}),
\end{cases}
\end{equation*}
with 
\begin{equation}\label{101}
m=\frac{q-5}{2}>0.
\end{equation}
The pseudo-scaling rule leaves the $L^2$ norm of the initial data invariant. 

There also exist soliton solutions to \eqref{CPG}, given by
$$u(t,x)=\lambda_0^{-\frac{1}{2}}\mathcal{Q}_{\lambda_0^{-m}\gamma}\big(\lambda_0^{-1}(x-x_0)-\lambda_0^{-3}(t-t_0)\big).$$
for all $\lambda_0>0$, $t_0\in\mathbb{R}$, $x_0\in\mathbb{R}$ with $\lambda_0^{-m}\gamma\ll1$. Here for $0\leq\omega<\omega^*\ll1$, $\mathcal{Q}_{\omega}$ is the unique radial nonnegative solution with exponential decay to the following ODE%
\footnote{The existence of such $\mathcal{Q}_{\omega}$ was first proved in Section 6 of \cite{BL}. An alternative proof was given in Section 2.1 of \cite{L3}}%
:
$$\mathcal{Q}_{\omega}''-\mathcal{Q}_{\omega}+\mathcal{Q}_{\omega}^5-\omega \mathcal{Q}_{\omega}|\mathcal{Q}_{\omega}|^{q-1}=0.$$

In \cite{L3}, Lan obtained a similar classification result for the asymptotic dynamics of \eqref{CPG} near the ground state $\mathcal{Q}_{\gamma}$.

More precisely, we fix a small universal constant $\omega^*>0$ (to ensure the existence of the ground state $\mathcal{Q}_{\omega}$), and then introduce the following $L^2$ tube around $\mathcal{Q}_{\gamma}$:
$$\mathcal{T}_{\alpha^*,\gamma}=\bigg\{u_0\in H^1\Big|\inf_{\lambda_0>0,\lambda_0^{-m}\gamma<\omega^*,x_0\in\mathbb{R}}\bigg\|u_0-\frac{1}{\lambda_0^{\frac{1}{2}}}\mathcal{Q}_{\lambda_0^{-m}\gamma}\bigg(\frac{x-x_0}{\lambda_0}\bigg)\bigg\|_{L^2}<\alpha^*\bigg\}.$$
Then we have:
\begin{theorem}[Dynamics in $\mathcal{A}_{\alpha_0}$]\label{PT}
For all $q>5$, there exists a constant $0<\alpha^*(q)\ll1$, such that if $0<\gamma\ll\alpha_0\ll\alpha^*<\alpha^*(q)$, then for all $u_0\in\mathcal{A}_{\alpha_0}$, the corresponding solution $u(t)$ to \eqref{CPG} has one and only one of the following behaviors:\\
	{\rm\bf(Soliton):} For all $t\in[0,+\infty)$, $u(t)\in\mathcal{T}_{\alpha^*,\gamma}$. Moreover,  there exist a constant $\lambda_{\infty}\in(0,+\infty)$ and a $C^1$ function $x(t)$ such that
	\begin{gather}
	\lambda_{\infty}^{\frac{1}{2}}u(t,\lambda_{\infty}\cdot+x(t))\rightarrow \mathcal{Q}_{\lambda_{\infty}^{-m}\gamma} \text{ in } H^1_{{\rm loc}}, \text{ as $t\rightarrow +\infty$};\label{12}\\
	x(t)\sim\frac{t}{\lambda_{\infty}^2},\quad\text{as $t\rightarrow +\infty$}.
	\end{gather}
	{\rm\bf(Blow down):} For all $t\in[0,+\infty)$, $u(t)\in\mathcal{T}_{\alpha^*,\gamma}$. Moreover, there exist two $C^1$ functions $\lambda(t)$ and $x(t)$, such that
	\begin{gather}
	\lambda^{\frac{1}{2}}(t)u(t,\lambda(t)\cdot+x(t))\rightarrow Q\text{ in } H^1_{{\rm loc}}, \text{ as $t\rightarrow +\infty$};\\
	\lambda(t)\sim t^{\frac{2}{q+1}},\quad x(t)\sim t^{\frac{q-3}{q+1}},\quad\text{as $t\rightarrow +\infty$},
	\end{gather}
	{\rm\bf(Exit):} There exists a $0<t^*_{\gamma}<+\infty$ such that $u(t^*_{\gamma})\notin\mathcal{T}_{\alpha^*,\gamma}$.
	
	There exist solutions associated to each regime. Moreover, the regime (Soliton) and (Exit) are stable under small perturbation in $\mathcal{A}_{\alpha_0}.$ 
\end{theorem}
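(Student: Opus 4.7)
The plan is to adapt the modulation-theoretic framework of Martel--Merle--Rapha\"el \cite{MMR1,MMR2} to the saturated equation \eqref{CPG}, as carried out in \cite{L3}. Inside the tube $\mathcal{T}_{\alpha^*,\gamma}$ I would decompose the solution as
$$u(t,x) = \frac{1}{\lambda(t)^{1/2}}\bigl[\mathcal{Q}_{\lambda(t)^{-m}\gamma} + \varepsilon(t,\cdot)\bigr]\!\left(\frac{x-x(t)}{\lambda(t)}\right),$$
with $\lambda(t), x(t)$ chosen via the implicit function theorem to enforce orthogonality conditions on $\varepsilon$ (relative to $\Lambda\mathcal{Q}_\omega$, $\partial_y\mathcal{Q}_\omega$, and a carefully chosen direction) that make the linearized quadratic form coercive. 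The decomposition is smooth as long as $u(t)\in\mathcal{T}_{\alpha^*,\gamma}$ and $\lambda^{-m}\gamma<\omega^*$; the first exit time from this tube then defines the (Exit) regime, and the initial data condition $u_0\in\mathcal{A}_{\alpha_0}$ supplies the quantitative smallness of $\varepsilon_0$ needed to launch the bootstrap.

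Next I would derive modulation equations for the scalars $b(t)=-\lambda_s/\lambda$ (in rescaled time $ds/dt=1/\lambda^3$), $\lambda(t)$, $x(t)$, together with a mixed energy-virial Lyapunov functional $\mathcal{F}(\varepsilon)$ in the spirit of \cite[Sec.~3]{MMR1}. The new ingredient compared to the unperturbed case is the saturation $\gamma u|u|^{q-1}$, which after rescaling contributes a damping-type term of size $\gamma\lambda^{-m}$ to the $b$-equation (cf.\ the pseudo-scaling \eqref{101}). Monotonicity of $\mathcal{F}$ dissipates $\varepsilon$ to the left on the soliton scale and reduces the long-time behavior to an essentially two-dimensional autonomous ODE in $(b,\lambda)$, in which the saturation plays the role of a forcing that vanishes as $\lambda\to\infty$ and stabilizes the soliton scale from below.

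A phase-plane analysis of this ODE then yields the trichotomy: solutions attracted to the equilibrium line $b\sim c\gamma\lambda^{-m}$ give $\lambda(t)\to\lambda_\infty\in(0,\infty)$ and hence (Soliton); solutions with $b$ eventually positive escape $\mathcal{T}_{\alpha^*,\gamma}$ in finite time, yielding (Exit); the intermediate case $b<0$, $\lambda\to+\infty$, integrated against the damping term, produces (Blow down) with the polynomial rate $\lambda(t)\sim t^{2/(q+1)}$ and translation law $x(t)\sim t^{(q-3)/(q+1)}$ inherited from $x_t\sim 1/\lambda^2$. Stability of (Soliton) and (Exit), together with the construction of (Blow down) as the codimension-one threshold, follows from a standard connectedness/Brouwer shooting argument on a one-parameter family, as in \cite[Sec.~6]{MMR1}. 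The main obstacle, in my view, is proving coercivity of $\mathcal{F}$ and controlling the modulation errors with constants independent of $\gamma$, uniformly over all scales satisfying $\lambda^{-m}\gamma<\omega^*$: the saturation modifies both the ground-state profile and every quadratic form by $O(\gamma\lambda^{-m})$ corrections, and it is precisely the scale hierarchy $0<\gamma\ll\alpha_0\ll\alpha^*\ll1$ that lets one close the bootstrap uniformly in $\gamma$.
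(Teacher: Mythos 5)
You should first note that the present paper does not prove this theorem at all: it is quoted from \cite{L3}, and Section \ref{S2} merely collects the ingredients of that proof (the profiles $Q_{b,\omega}$, the modulation estimates, the monotonicity formula) that are reused for Theorem \ref{MT}. Measured against the actual argument of \cite{L3} (and its model \cite{MMR1}), your outline identifies the right headlines --- modulation inside the tube, a mixed energy--virial Lyapunov functional, reduction to a finite-dimensional system, a trichotomy read off from the scaling parameter, openness of the two stable regimes --- but it contains two concrete structural errors and otherwise remains a plan rather than a proof.

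First, the decomposition you propose is incomplete in a way that breaks the whole scheme: neither \cite{MMR1} nor \cite{L3} decomposes around $\mathcal{Q}_{\lambda^{-m}\gamma}$ alone with $b:=-\lambda_s/\lambda$ defined a posteriori. Instead $b$ is introduced as a third modulation parameter attached to the deformed profile $Q_{b,\omega}=\mathcal{Q}_{\omega}+b\chi(|b|^{\beta}y)P_{\omega}$ of \eqref{203}, with the three orthogonality conditions \eqref{OCG}. The correction $bP_{\omega}$ is precisely what makes the profile error $\Psi_{b,\omega}$ quadratic in $b$ (see \eqref{23}), what yields the sharp law $\partial_s b+2b^2+\partial_s\omega\,G'(\omega)\approx 0$ of \eqref{LOB}, and what produces the $L^1$-type corrections $J_1^{\gamma},J_2^{\gamma}$ without which the Lyapunov monotonicity cannot be closed; with your two-parameter decomposition the modulation equations carry an uncontrolled $O(b)$ error and the phase-plane analysis collapses. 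Second, your assignment of regimes to the sign of $b$ is inconsistent and partly reversed: $b>0$ corresponds to concentration ($\lambda$ decreasing), which in the saturated problem is arrested by the growth of $\omega=\gamma\lambda^{-m}$ and gives the (Soliton) regime with $\lambda_{\infty}\sim\gamma^{2/(q-1)}$ as in \eqref{13}; (Exit) corresponds to $b$ crossing a \emph{negative} threshold, not to ``$b$ eventually positive''; (Blow down) is the marginal regime $b<0$, $b\to 0$, $\lambda\to+\infty$. Finally, the genuinely hard parts --- coercivity of the functional with constants uniform in $\gamma$ over all scales with $\lambda^{-m}\gamma<\omega^*$, the bootstrap under the hierarchy $\gamma\ll\alpha_0\ll\alpha^*$, and the rigidity argument that excludes any behavior other than the three listed --- are only named as obstacles, not addressed.
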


\begin{theorem}[Limiting case as $\gamma\rightarrow0$]\label{ST}
Let us fix a nonlinearity $q>5$, and choose $0<\alpha_0\ll\alpha^*<\alpha^*(q)$ as in Theorem \ref{PT}. For all $u_0\in\mathcal{A}_{\alpha_0}$, let $u(t)$ be the corresponding solution of \eqref{CP}, and $u_{\gamma}(t)$ be the corresponding solution of \eqref{CPG}. Then we have:
\begin{enumerate}
	\item If $u(t)$ is in the (Blow up) regime defined in Theorem \ref{T1}, then there exists $0<\gamma(u_0,\alpha_0,\alpha^*,q)\ll\alpha_0$ such that if $0<\gamma<\gamma(u_0,\alpha_0,\alpha^*,q)$, then $u_{\gamma}(t)$ is in the (Soliton) regime defined in Theorem \ref{PT}. Moreover, there exist constants $d_i=d_i(u_0,q)>0$, $i=1,2$, such that
	\begin{equation}
	\label{13}
	d_1\gamma^{\frac{2}{q-1}}\leq\lambda_{\infty}\leq d_2\gamma^{\frac{2}{q-1}},
	\end{equation}
	where $\lambda_{\infty}$ is the constant defined in \eqref{12}.
	\item If $u(t)$ is in the (Exit) regime defined in Theorem \ref{T1}, then there exists $0<\gamma(u_0,\alpha_0,\alpha^*,q)\ll\alpha_0$ such that if $0<\gamma<\gamma(u_0,\alpha_0,\alpha^*,q)$, then $u_{\gamma}(t)$ is in the (Exit) regime defined in Theorem \ref{PT}.
\end{enumerate} 
\end{theorem}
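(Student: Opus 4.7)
My plan is to combine $H^1$ continuous dependence of \eqref{CPG} on $\gamma$ with the classification in Theorem \ref{PT}. The first step, on which both parts rest, is that for every $T_0 < T$,
\[
\sup_{t\in[0,T_0]} \|u_\gamma(t) - u(t)\|_{H^1} \longrightarrow 0 \quad\text{as } \gamma \to 0.
\]
This is a standard perturbative estimate in the Kenig--Ponce--Vega $H^1$ local theory: the difference $w_\gamma := u_\gamma - u$ solves a linear gKdV-type equation with source $-\gamma(u_\gamma|u_\gamma|^{q-1})_x$, which is $O(\gamma)$ in the relevant Strichartz / local-smoothing norms, since $u$ is bounded in $H^1$ on $[0,T_0]$ (as $T_0 < T$) and $u_\gamma$ is uniformly bounded in $H^1$ on $[0,T_0]$ for $\gamma$ small (either from the conservation-law bound recorded in Subsection~\ref{S13}, or, more sharply, from a short-time bootstrap based on closeness to $u$).

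For part (2) the conclusion is then short: since $u(t^*) \notin \mathcal{T}_{\alpha^*}$ one has $\inf_{\lambda_0, x_0} \|u(t^*) - \lambda_0^{-1/2} Q((\cdot - x_0)/\lambda_0)\|_{L^2} \geq \alpha^*$; using $\mathcal{Q}_\omega \to Q$ in $H^1$ as $\omega \to 0$, the $L^2$-distance defining $\mathcal{T}_{\alpha^*, \gamma}$ differs from the one defining $\mathcal{T}_{\alpha^*}$ by $o_{\gamma\to 0}(1)$ on $L^2$-bounded sets, so combining with the first step at $t = t^*$ yields $u_\gamma(t^*) \notin \mathcal{T}_{\alpha^*, \gamma}$ for all sufficiently small $\gamma$, whence $u_\gamma$ falls in (Exit) by Theorem \ref{PT}.

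Part (1) is the main case. The scheme is to pick a time $t_\gamma < T$ very close to the blow-up time at which the concentration scale $\lambda(t_\gamma)$ of $u$ is small yet satisfies $\lambda(t_\gamma)^{-m} \gamma \ll 1$, and apply the pseudo-scaling at scale $\lambda(t_\gamma)$ and center $x(t_\gamma)$ to $u_\gamma$, producing $v_\gamma$ which solves \eqref{CPG} with a renormalized small coupling $\tilde\gamma$. By Step~1 applied on $[0, t_\gamma]$ together with the asymptotic profile \eqref{converge}--\eqref{14} for $u$, the rescaled initial data $v_\gamma(0)$ lies in $\mathcal{A}_{\alpha_0'}$ with $\alpha_0' = o_{\gamma \to 0}(1)$ and is $L^2$-close to the modified ground state $\mathcal{Q}_{\tilde\gamma}$. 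The latter is itself a traveling-wave (Soliton)-regime solution of the $\tilde\gamma$-version of \eqref{CPG}, so Theorem \ref{PT}'s stability of (Soliton) in $\mathcal{A}_{\alpha_0}$ should pin $v_\gamma$ into (Soliton); undoing the rescaling identifies $u_\gamma$ as (Soliton). Finally, conservation of $E^\gamma$ fixes the asymptotic scale: the limit profile $\lambda_\infty^{-1/2}\mathcal{Q}_{\lambda_\infty^{-m}\gamma}(\cdot / \lambda_\infty)$ has energy of leading order $\gamma \lambda_\infty^{-(q-1)/2}$ (obtained by expanding $E(\mathcal{Q}_\omega)$ around $E(Q)=0$, the latter from the Pohozaev identity), and equating this to the nonzero limit $E^\gamma_0 \to E_0$ inherited from the (Blow up) regime of $u$ gives $\lambda_\infty \sim (\gamma / |E_0|)^{2/(q-1)}$, which is \eqref{13}.

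The main obstacle is the delicate separation between (Soliton) and the codimension-one (Blow down) manifold in this last step. Since the unperturbed limit (i.e.\ $\tilde\gamma = 0$) has (Soliton) only as a codimension-one unstable regime, the stability radius of (Soliton) around $\mathcal{Q}_{\tilde\gamma}$ in Theorem \ref{PT} may shrink as $\tilde\gamma \to 0$, and one must verify this does not disqualify $v_\gamma(0)$. The natural route is to unpack the modulation theory of \cite{L3} to identify a scalar invariant (playing in the rescaled problem the role of $\ell_0$ in Theorem \ref{T1}) whose sign selects (Soliton) versus (Blow down), and to show that this sign is inherited from $\ell_0 > 0$ of the blow-up dynamics of $u$ uniformly in small $\gamma$. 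The accompanying energy bookkeeping then sharpens the exponent in \eqref{13} to $2/(q-1)$.
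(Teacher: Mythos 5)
First, a point of reference: the paper does not prove Theorem \ref{ST} at all --- it is recalled verbatim from \cite{L3}, and what the present paper does reproduce (in Section \ref{S24}, estimates \eqref{229}--\eqref{237}) is precisely the machinery by which \cite{L3} proves part (1). Measured against that, your proposal for part (2) is essentially the standard argument (perturbative $H^1$ closeness on $[0,t^*]$ plus comparison of the two tubes; you should be a little careful that the tube $\mathcal{T}_{\alpha^*,\gamma}$ involves $\mathcal{Q}_{\lambda_0^{-m}\gamma}$ for \emph{all} admissible $\lambda_0$, so the two distance functions differ by $O(\omega^*)$ rather than $o_{\gamma\to0}(1)$, but this is absorbed since $\omega^*$ is fixed small relative to $\alpha^*$). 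The problem is part (1), where your argument does not close.

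The gap is exactly the one you flag and then defer. Invoking the stability of the (Soliton) regime of Theorem \ref{PT} around the traveling wave $\mathcal{Q}_{\tilde\gamma}$ cannot work uniformly: as $\tilde\gamma\to 0$ the (Soliton) regime degenerates to the codimension-one threshold manifold of \cite{MMNR}, so the stability radius necessarily shrinks to $0$, while your rescaled data $v_\gamma(0)$ is only $o_{\gamma\to0}(1)$-close to $\mathcal{Q}_{\tilde\gamma}$ with no quantitative comparison between the two rates. Writing ``the natural route is to unpack the modulation theory of \cite{L3}'' is naming the proof, not giving it: the actual selection mechanism is dynamical, namely the almost-conservation of $(b_{\gamma}+c_1\omega_{\gamma})/\lambda_{\gamma}^2$ (estimate \eqref{222}, proved via the monotonicity formula and the $L^1$-type functionals $J^{\gamma}$), which shows that $b_\gamma/\lambda_\gamma^2\sim\ell^*=b(t_1^*)/\lambda^2(t_1^*)>0$ persists, drives $\lambda_\gamma$ down following the unperturbed blow-up until $\omega_\gamma=\gamma\lambda_\gamma^{-m}$ catches up with $b_\gamma$, and then saturates at $\lambda_\infty\sim(\gamma/\ell^*)^{1/(m+2)}=(\gamma/\ell^*)^{2/(q-1)}$ (see \eqref{233}--\eqref{237}). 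Your alternative derivation of \eqref{13} by energy conservation also fails as stated: $E^\gamma_0$ splits into the soliton's contribution $\lambda_\infty^{-2}E^{\omega_\infty}(\mathcal{Q}_{\omega_\infty})$ \emph{plus} an $O(1)$ radiation contribution (the analogue of $E(u^*)$), and convergence to the profile holds only in $H^1_{\rm loc}$, so energy conservation alone cannot isolate $\lambda_\infty^{-2}\omega_\infty$. In particular the constant in \eqref{13} is governed by the blow-up speed $\ell^*$ (equivalently $\ell_0$ from \eqref{238}), not by $|E_0|$. The exponent $2/(q-1)$ you obtain is correct, but for the wrong reason.
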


\begin{remark}
Theorem \ref{PT} shows that in the saturated setting there may be some different behavior (the \emph{blow down} behavior), which does not seem to happen in the unperturbed cases for solution with initial data in $\mathcal{A}_{\alpha_0}$. Examples for solution with a blow down behavior was also found by Donninger, Krieger \cite{DK} for energy critical wave equations. There are also examples of blow down behavior for $L^2$ critical NLS, where the blow down behavior can be obtained as the pseudo-conformal transformation of the log-log blow-up solutions.
\end{remark}

\subsection{Main result} 
The main purpose of this paper is to construct a natural continuation after the blow-up time for the $H^1$ blow-up solutions of \eqref{CP}.
This type of problems arising in physics has attracted a considerable attention in past few years but it is still poorly understood even at a formal level. 

One approach is to consider a sequence of globally defined approximate solutions $\{u_{\delta}(t)\}_{\delta>0}$ such that $u_{\delta}(t)$ converges (as $\delta\rightarrow0$) to the blow up solution $u(t)$ for all $t<T$, where $T<+\infty$ is the blow-up time. Then we expect that for $t>T$, the limit also exists and satisfies the original equation in some sense. And if this holds, the limiting function can be viewed as a natural extension of the blow-up solution $u(t)$ after the blow-up time $T$.

Examples of this approach were achieved in \cite{M2,M6,MRS3} for the focusing $L^2$-critical nonlinear Schr\"odinger equation:
\begin{equation}
\label{NLS}\tag{NLS}
\begin{cases}
i\partial_tu+\Delta u+|u|^{\frac{4}{d}}u=0,\;(t,x)\in\mathbb{R}\times\mathbb{R}^d,\\
u(0)=u_0\in H^1(\mathbb{R}^d),
\end{cases}
\end{equation}
where different ways to construct the approximation sequence $\{u_{\delta}(t)\}_{\delta>0}$ are introduced. In \cite{M6}, Merle constructed $\{u_{\delta}(t)\}_{\delta>0}$ as solutions to
$$\begin{cases}
i\partial_tu+\Delta u+|u|^{\frac{4}{d}-\delta}u=0,\;(t,x)\in\mathbb{R}\times\mathbb{R}^d,\\
u(0)=u_0\in H^1(\mathbb{R}^d).
\end{cases}$$
In \cite{MRS3}, Merle-Rapha\"el-Szeftel constructed $\{u_{\delta}(t)\}_{\delta>0}$ as global solutions to \eqref{NLS} with initial $u_{0,\delta}\in H^1$ such that $\lim_{\delta\rightarrow0}u_{0,\delta}=u_0$ in $H^1$. While in \cite{M2}, Merle constructed $\{u_{\delta}(t)\}_{\delta>0}$ as solutions to the $L^2$-critical NLS with a saturated perturbation, $i.e.$
\begin{equation}\label{11}
\begin{cases}
i\partial_tu+\Delta u+|u|^{\frac{4}{d}}u-\delta|u|^{q-1}u=0,\;(t,x)\in\mathbb{R}\times\mathbb{R}^d,\\
u(0)=u_0\in H^1(\mathbb{R}^d),
\end{cases}
\end{equation}
with $1+d/4<q<1+4/(d-2)$. On the other hand, the saturated perturbation like \eqref{11} is also considered as a correction to the NLS equations with pure power nonlinearities. See detailed discussion in \cite{GA,L3,0MRS} and the references therein.

In this paper, we follow similar arguments as in \cite{M2}, $i.e.$ consider the approximate sequence $\{u_{\gamma}(t)\}_{\gamma>0}$ as solutions to the saturated problem \eqref{CPG} with $\gamma>0$. 

For this approximate sequence, we may ask the following questions:
\begin{itemize}
\item {\bf(Compactness)} Is there a compact behavior for $u_{\gamma}(t)$ as $\gamma\rightarrow 0$, or equivalently are there a subsequence $\gamma_n\rightarrow 0$, and a function $u^{\infty}(t)$ such that $u_{\gamma_n}(t)\rightarrow u^{\infty}(t)$, as $n\rightarrow +\infty$? And in which sense does this limiting function $u^{\infty}(t)$ satisfy the unperturbed gKdV equation \eqref{CP}?
\item {\bf(Uniqueness)} Is the limiting function $u^{\infty}(t)$ unique or equivalently does $u^{\infty}(t)=\lim_{\gamma\rightarrow0}u_{\gamma}(t)$ hold for all $t>T$? And if this does not hold, what information is lost?
\item {\bf (Stability)} Is the blow-up phenomenon stable or equivalently do we have
$$\limsup_{\gamma\rightarrow0}\|u_{\gamma}(t)\|_{H^1}=+\infty$$
for all $t\geq T$?
\item {\bf (Continuity)} Does the limiting function (if it exists) depend continuously on the initial data?
\end{itemize}

Thanks to the work of \cite{L3}, we may give a precise answer to the above questions. Indeed, we have:
\begin{theorem}\label{MT}
Let $0<\alpha_0\ll1$ be the universal constant introduced in Theorem \ref{T1} and \ref{PT}, and $u_0\in \mathcal{A}_{\alpha_0}$ such that the corresponding solution $u(t)$ to \eqref{CP} belongs to the (Blow up) regime introduced in Theorem \ref{T1}.  Let $T<+\infty$ be the corresponding blow-up time. Now, for $q>5$ and $\gamma>0$ small enough, we denote by $u_{\gamma}(t)$, the solution to \eqref{CPG} with initial data $u_{\gamma}(0)=u_0$. We also denote by 
$$u_{\rm ext}(t)=
\begin{cases}
u(t),\;\text{if }t\in[0,T),\\
v(t),\;\text{if }t\in[T,+\infty),
\end{cases}
$$
where $v(t)$ is the unique global solution to \eqref{CP} with%
\footnote{Recall that $u^*\in H^1$ is the limiting profile introduced in Theorem \ref{T1}. }
\begin{equation}
v(T)=u^*\in H^1.
\end{equation}

Then, we have:
\begin{enumerate}
\item For all $T_0>T$, $R>0$, we have $u_{\rm ext}(t)\in\mathcal{C}([0,T_0], L^2(|x|<R))$, and
\begin{equation}
\label{15}
u_{\gamma}(t)\rightarrow u_{\rm ext}(t)\text{ in } \mathcal{C}\big([0,T_0], L^2(|x|<R)\big), \;\;\text{as }\gamma\rightarrow0,
\end{equation}
\item The limiting function $u_{\rm ext}(t)$ is a weak solution to \eqref{CP} in the following sense: for all $z(t,x)\in C^{\infty}_0(\mathbb{R}\times\mathbb{R})$, we have for all $t\in[0,+\infty)$:
\begin{align}
\label{16}
&\int_{\mathbb{R}}u_{\rm ext}(t,x)z(t,x)\,dx-\int_{\mathbb{R}}u_0(x)z(0,x)\,dx\nonumber\\
&=\int_0^t\bigg\{\int_{\mathbb{R}}u_{\rm ext}(s,x)\partial_tz(s,x)\,dx+\int_{\mathbb{R}}u_{\rm ext}(s,x)\partial_x^3z(s,x)\,dx\nonumber\\
&\qquad+\int_{\mathbb{R}}u^5_{\rm ext}(s,x)\partial_xz(s,x)\,dx\bigg\}\,ds.
\end{align}
\end{enumerate}
\end{theorem}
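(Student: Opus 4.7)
The plan is to split the analysis at the blow-up time $T$: convergence on $[0,T)$ by perturbation theory, and convergence on $[T,T_0]$ by a compactness--uniqueness argument built on the soliton decomposition of \cite{L3}.

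For any $T'<T$, the unperturbed solution $u(t)$ is bounded in $H^1$ on $[0,T']$. Treating the saturated term $-\gamma(u|u|^{q-1})_x$ as a perturbation of size $O(\gamma)$ in the Kenig-Ponce-Vega $H^1$ well-posedness framework for \eqref{CP}, a standard continuous-dependence argument (Strichartz estimates plus Gronwall) gives $u_\gamma \to u$ in $\mathcal{C}([0,T'],H^1)$, hence in $\mathcal{C}([0,T'],L^2(|x|<R))$ for every $R>0$.

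For $t$ near and beyond $T$, Theorem \ref{ST}(1) places $u_\gamma$ in the (Soliton) regime of Theorem \ref{PT} for all sufficiently small $\gamma$. The modulation analysis of \cite{L3} then provides the decomposition
\begin{equation*}
u_\gamma(t,x) = \lambda_\gamma^{-1/2}(t)\,\mathcal{Q}_{\lambda_\gamma^{-m}(t)\gamma}\!\left(\frac{x-x_\gamma(t)}{\lambda_\gamma(t)}\right) + \tilde{u}_\gamma(t,x),\qquad \|\tilde{u}_\gamma(t)\|_{L^2}\lesssim \alpha^*,
\end{equation*}
with $\lambda_\gamma(t)$ monotonically decreasing to $\lambda_\infty\sim \gamma^{2/(q-1)}$ and $x_\gamma(t)\to +\infty$. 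The geometric key is that $x_\gamma(t)/\lambda_\gamma(t)\to+\infty$ as $\gamma\to 0$ uniformly on compact time intervals bounded away from zero: for $t<T$ by comparison with the rates \eqref{14} of $u$, and for $t\geq T$ because $\lambda_\gamma$ is of order $\gamma^{2/(q-1)}$ while $x_\gamma$ stays of order one or larger. By the exponential decay of $\mathcal{Q}_\omega$, the soliton bump therefore contributes $o(1)$ in $L^2(|x|<R)$. Combined with the $L^2$-asymptotics \eqref{converge} at $T^-$ and the pre-$T$ convergence above, this yields $u_\gamma(T)\to u^* = v(T)$ in $L^2(|x|<R)$.

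To extend the convergence onto $[T,T_0]$, I would argue by compactness and uniqueness. Using mass conservation, the decomposition above, and local smoothing estimates for \eqref{CPG}, one extracts a subsequential $\mathcal{C}_t L^2_{\mathrm{loc}}$-limit $u^\infty$ with $u^\infty(T)=u^*$ that is a distributional solution of \eqref{CP} on $(T,T_0)$: the saturated term in the weak formulation drops out because $\gamma\int_0^{T_0}\!\int_{\mathrm{supp}(z)}|u_\gamma|^q\,dx\,dt\to 0$, the soliton part being exponentially small on $\mathrm{supp}(z)$ and the remainder being locally controlled. Since $u^*\in H^1$ has $\|u^*\|_{L^2}<\|Q\|_{L^2}$ (the soliton has carried off $\|Q\|_{L^2}^2$ of the mass, and $M_0\ll 2\|Q\|_{L^2}^2$ in $\mathcal{A}_{\alpha_0}$), the $H^1$-subcritical Cauchy theory forces $u^\infty=v$, so the full family converges and \eqref{15} holds. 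The identity \eqref{16} is then automatic, since $u_{\rm ext}$ is a classical solution on each of $[0,T)$ and $[T,+\infty)$, and the $L^2$-continuity at $T$ glues the two weak formulations. The main obstacle I foresee is rigorously upgrading weak subsequential compactness to the strong $\mathcal{C}_t L^2_{\mathrm{loc}}$-convergence near $t=T$: the residual $\tilde{u}_\gamma$ is only $L^2$-small, not uniformly $H^1$-bounded in original variables on the transition window $t\approx T$, so one must carefully combine the monotonicity formulas and modulation-parameter control of \cite{MMR1, L3} on either side of $T$ to bridge the concentrating and stabilized regimes.
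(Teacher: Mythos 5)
Your overall architecture (soliton escapes to $x=+\infty$ and contributes nothing on $|x|<R$; the remainder converges to $v$; the weak formulation follows by passing to the limit in the perturbed equation) matches the paper, and your treatment of the pre-$T$ convergence and of the matching $u_\gamma\approx v$ near $t=T^-$ is essentially the paper's. But the central analytic step --- proving that the remainder $\tilde u_\gamma$ converges to $v$ in $\mathcal{C}([T,T_0],L^2)$ --- is exactly where your compactness--uniqueness scheme does not close, and you have flagged the obstruction yourself without resolving it. Two things go wrong. First, mass conservation and local smoothing give at best weak subsequential limits; upgrading to strong $\mathcal{C}_tL^2_{\rm loc}$ convergence through the transition window $t\approx T$ is not a technicality but the whole difficulty. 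Second, even granting a subsequential distributional limit $u^\infty$ with $u^\infty(T)=u^*$, you cannot conclude $u^\infty=v$: uniqueness for the $L^2$-critical gKdV is available in the class of strong (Strichartz) solutions, and a mere $\mathcal{C}_tL^2_{\rm loc}$ distributional limit is not known to lie in that class, so the assertion that ``the $H^1$-subcritical Cauchy theory forces $u^\infty=v$'' is not justified.

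The paper avoids both problems by a quantitative stability argument rather than compactness: it writes the equation satisfied by $\tilde u_\gamma$ on $[T-\eta,T_0]$, namely \eqref{CP} forced by $\mathcal{E}+\partial_xF_1(\tilde u_\gamma)+\partial_xF_2(\tilde u_\gamma)$, and shows these forcings are $O(\delta(\eta))$ in the dual Strichartz norm $L_x^{5/4}L_t^{10/9}$, using the modulation estimates \eqref{MS1}--\eqref{MS2}, the monotonicity formula \eqref{MF1}, and the space--time bound \eqref{324} on $\int\mathcal{N}^{\gamma}_{1,\rm loc}/\lambda^3_{\gamma}\,ds$. The $L^2$ perturbation theory of \cite{KKSV} (Lemma \ref{L1}) then yields $\sup_{[T-\eta,T_0]}\|\tilde u_\gamma-v\|_{L^2}\lesssim\delta(\eta)$ directly, with no compactness extraction and no appeal to uniqueness of weak solutions; this is the essential ingredient missing from your proposal. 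A smaller but genuine slip: for $t\geq T$ the condition $x_\gamma(t)/\lambda_\gamma(t)\to+\infty$ does not by itself remove the soliton from $L^2(|x|<R)$ --- if $x_\gamma$ stayed of order one while $\lambda_\gamma\to0$, the bump would concentrate inside the window and retain mass $\approx\|Q\|_{L^2}$. What is needed, and what the paper proves via $\liminf_{\gamma\to0}x_\gamma(t)\geq x(t_0)\to+\infty$ as $t_0\to T$, is the divergence $x_\gamma(t)\to+\infty$ itself.
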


\noindent {\it Comments on  Theorem \ref{MT}:}

{\it 1. Global existence for $v(t)$.} From the arguments in \cite{MMR1}, we have $\|u^*\|_{H^1}\ll1$, which together with Theorem 2.8 in \cite{KPV} implies the global existence of $v(t)$ immediately.

{\it 2. Continuation after blow-up time for $L^2$-critical gKdV.} Theorem \ref{MT} shows that $\lim_{\gamma\rightarrow0}u_{\gamma}(t)$ exists in $\mathcal{C}([0,T_0], L^2(|x|<R))$, and the limiting function $u_{\rm ext}(t)$ satisfies \eqref{CP} in the weak sense, hence can be viewed as a natural extension of the blow-up solution $u(t)$. Moreover, on may easily check that the limiting function $u_{\rm ext}(t)$ depends continuously on the initial data in the stable blow-up regime. 

{\it 3. Regular behavior for the approximate sequence.} There is no singular behavior for $u_{\gamma}(t)$ as $\gamma\rightarrow0$. More precisely, the limiting function $u_{\rm ext}(t)$ is unique and the blow-up phenomenon is stable $i.e.$ for all $t\geq T$, we have
$$\|u_{\gamma}(t)\|_{H^1}\rightarrow+\infty,\; \text{as }\gamma\rightarrow0.$$
We mention here that these properties do not always hold true. For example, from \cite{M6,MRS3}, in the Schr\"odinger case, the limiting function for a special choice of approximate sequence $\{u_{\varepsilon}(t)\}_{\varepsilon>0}$ may not be unique. We have a loss of information on the phase in this case, see also in \cite{DFS,FK} for more detailed discussion. On the other hand, the blow-up phenomenon is unstable for $t>T$. More precisely, for all $t>T$,
$$\limsup_{\varepsilon\rightarrow0}\|u_{\varepsilon}(t)\|_{H^1}<+\infty.$$ 

{\it 4. On the exotic blow-up regime.} We expect to construct a similar extension for blow-up solutions to \eqref{CP} in the unstable regime (for example the solutions constructed in \cite{MMR2, MMR3}.) And due to the instability, we may expect some chaotic behavior for the approximate sequence $\{u_{\gamma}(t)\}_{\gamma>0}$ as $\gamma\rightarrow0$ (nonuniqueness of the limiting function, instability of the blow-up phenomenon $etc.$).

{\it 5. The supercritical case.} In \cite{L1}, Lan proved the existence and stability of self-similar blow-up solutions for slightly $L^2$-critical gKdV equations. Similar results can also be expected. But due to the supercritical structure, we know little about the asymptotic dynamics for the saturated problem in this case. Hence it is hard to apply the argument in this paper to the supercritical case. On the other hand, for the self-similar blow-up solutions constructed in \cite{L1}, the singularity concentrates on a finite point. This is different from the critical case, where the singularity goes to $+\infty$, as $t$ converges to the blow-up time. This fact may result in some irregular behavior for the continuation solution (for example, loss of some information or instability of blow-up phenomenon). But it is completely open. 

\subsection{Notation}
For $0\leq\omega<\omega^*\ll1$, we let $\mathcal{Q}_{\omega}$ be the unique nonnegative radial solution with exponential decay to the following ODE:
\begin{equation}\label{CME0}
\mathcal{Q}_{\omega}''-\mathcal{Q}_{\omega}+\mathcal{Q}_{\omega}^5-\omega \mathcal{Q}_{\omega}|\mathcal{Q}_{\omega}|^{q-1}=0.
\end{equation}
For simplicity, we denote by $Q=\mathcal{Q}_0$. Recall that we have:
$$Q(x)=\bigg(\frac{3}{\cosh^2(2x)}\bigg)^{\frac{1}{4}}.$$

We also introduce the linearized operator at $\mathcal{Q}_{\omega}$:
$$L_{\omega}f=-f''+f-5\mathcal{Q}_{\omega}^4f+q\omega|\mathcal{Q}_{\omega}|^{q-1}f.$$
Similarly, we denote by $L=L_0$.

Next, we introduce the scaling operator:
$$\Lambda f=\frac{1}{2}f+yf'.$$

Then, for a given small constant $\alpha>0$, we denote by $\delta(\alpha)$ a generic small constant with
$$\lim_{\alpha\rightarrow0}\delta(\alpha)=0.$$

Finally, we denote the $L^2$ scalar product by
$$(f,g)=\int f(x)g(x)\,dx.$$

\subsection*{Acknowledgment} The author would like to thank his Ph.D. supervisors F. Merle and T. Duyckaerts for introducing this problem to him and providing a lot of guidance.

\section{Overview on the asymptotic dynamics for perturbed and unperturbed gKdV equations}\label{S2}
In this section we collect a number of results which can be explicitly found in \cite{L3,MMR1}  and which we will use in the proof of Theorem \ref{MT}.

\subsection{The nonlinear profile}\label{S21}
Denote by $\mathcal{Y}$ the set of smooth function $f$ such that for all $k\in\mathbb{N}$, there exist $r_k>0$, $C_k>0$, with
$$|\partial_{y}^kf(y)|\leq C_k(1+|y|)^{r_k}e^{-|y|}.$$

We recall the construction of the nonlinear profiles%
\footnote{We mention here that we use a notation different from \cite{MMR1} to avoid misunderstanding.}
 $V_b$ for \eqref{CP} and $Q_{b,\omega}$ for \eqref{CPG}.

\begin{lemma}[{Nonlocalized profile, Proposition 2.4 in \cite{L3}, Proposition 2.2 in \cite{MMR1}}]
For all $0\leq\omega\ll1$, there exist a smooth function $P_{\omega}$ with $\partial_yP_{\omega}\in\mathcal{Y}$, such that:
\begin{gather}
(L_{\omega}P_{\omega})'=\Lambda \mathcal{Q}_{\omega},\quad \lim_{y\rightarrow-\infty}P_{\omega}(y)=\frac{1}{2}\int \mathcal{Q}_{\omega},\label{21}\\
(P_{\omega},\mathcal{Q}'_{\omega})=0,\quad  (P_{\omega},\mathcal{Q}_{\omega})=\frac{1}{16}\bigg(\int Q\bigg)^2+O(\omega).\label{22}
\end{gather}

Moreover there exist constants $C_0,C_1,\ldots$, independent of $\omega$, such that
\begin{gather}
|P_{\omega}(y)|+\bigg|\frac{\partial P_{\omega}}{\partial\omega}(y)\bigg|\leq C_0e^{-\frac{y}{2}},\text{ \rm for all }y>0,\\
|P_{\omega}(y)|+\bigg|\frac{\partial P_{\omega}}{\partial\omega}(y)\bigg|\leq C_0,\text{ \rm for all }y\in\mathbb{R},\\
|\partial_{y}^kP_{\omega}(y)|\leq C_ke^{-\frac{|y|}{2}}\text{ \rm for all $k\in\mathbb{N}_+$, $y\in\mathbb{R}$}.
\end{gather}
\end{lemma}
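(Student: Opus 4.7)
The plan is to integrate the identity $(L_\omega P_\omega)' = \Lambda\mathcal{Q}_\omega$ once to reduce it to the scalar equation $L_\omega P_\omega = \Xi_\omega$, then invert $L_\omega$ with a cutoff-splitting that handles the non-integrability of the source at $-\infty$. Define
\[
\Xi_\omega(y) := -\int_y^{+\infty}\Lambda\mathcal{Q}_\omega(z)\,dz.
\]
Since $\Lambda\mathcal{Q}_\omega$ inherits the exponential decay of $\mathcal{Q}_\omega$, the function $\Xi_\omega$ is smooth and decays exponentially at $+\infty$; at the other end, one integration by parts gives $\int\Lambda\mathcal{Q}_\omega = -\tfrac12\int\mathcal{Q}_\omega$, so $\Xi_\omega(-\infty) = \tfrac12\int\mathcal{Q}_\omega$. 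Consequently, any bounded solution of $L_\omega P_\omega = \Xi_\omega$ whose derivative lies in $\mathcal{Y}$ automatically has the prescribed limits at $\pm\infty$.

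For the inversion, first verify the solvability condition $(\Xi_\omega,\mathcal{Q}_\omega')=0$, which after an integration by parts reduces to the standard identity $(\Lambda\mathcal{Q}_\omega,\mathcal{Q}_\omega)=0$. Since $\Xi_\omega$ is not $L^2$ at $-\infty$, I would split
\[
P_\omega = \tfrac12\Bigl(\int\mathcal{Q}_\omega\Bigr)(1-\chi) + \widetilde P_\omega,
\]
where $\chi\in C^\infty(\mathbb{R})$ is a fixed cutoff equal to $1$ on $[1,+\infty)$ and vanishing on $(-\infty,0]$. Then $\widetilde\Xi_\omega := \Xi_\omega - L_\omega\bigl[\tfrac12(\int\mathcal{Q}_\omega)(1-\chi)\bigr]$ decays exponentially on both sides and remains orthogonal to $\mathcal{Q}_\omega'$. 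The equation $L_\omega\widetilde P_\omega = \widetilde\Xi_\omega$ is then solved by variation of parameters using the two linearly independent solutions of $L_\omega f=0$---the decaying $\mathcal{Q}_\omega'$ and a second solution growing like $e^{|y|}$ at $\pm\infty$---and the remaining free parameter (multiples of $\mathcal{Q}_\omega'$) is fixed by imposing $(P_\omega,\mathcal{Q}_\omega')=0$.

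The pointwise bounds follow from this construction: $|\partial_y^k P_\omega(y)|\le C_k e^{-|y|/2}$ for $k\ge1$ by bootstrapping in the identity $-P_\omega''+P_\omega = \Xi_\omega + 5\mathcal{Q}_\omega^4 P_\omega - q\omega\mathcal{Q}_\omega^{q-1}P_\omega$, and $|P_\omega(y)|\le C_0 e^{-y/2}$ for $y>0$ since $P_\omega$ vanishes at $+\infty$. Uniformity in $\omega\in[0,\omega^*]$ comes from the smooth dependence of $\mathcal{Q}_\omega$ on $\omega$ (implicit function theorem near $Q$), hence of the coefficients of $L_\omega$ and its Green's function on $(\mathcal{Q}_\omega')^\perp$; differentiating the entire construction in $\omega$ yields the same estimates for $\partial_\omega P_\omega$. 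Finally, $(P_\omega,\mathcal{Q}_\omega)$ depends smoothly on $\omega$, so a Taylor expansion reduces the claimed formula to $(P_0,Q) = \tfrac{1}{16}(\int Q)^2$, which is the explicit computation carried out in Proposition~2.2 of \cite{MMR1} via $LQ=-4Q^5$ and integration by parts in the scalar equation for $P_0$.

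The main obstacle is the non-integrability of $\Xi_\omega$ at $-\infty$: direct Fredholm inversion of $L_\omega$ on $(\mathcal{Q}_\omega')^\perp$ demands an $L^2$ source, so the cutoff decomposition above is what reduces the problem to a well-posed one. A secondary technical issue is keeping the constants $C_k$ uniform in $\omega$, which requires a continuous-in-$\omega$ choice of the second fundamental solution of $L_\omega$; this is routine once $\omega^*$ is small enough that $L_\omega$ is a small perturbation of $L$.
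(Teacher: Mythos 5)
The paper offers no proof of this lemma; it is quoted verbatim from Proposition 2.2 of \cite{MMR1} and Proposition 2.4 of \cite{L3}, and your construction (integrate $(L_\omega P_\omega)'=\Lambda\mathcal{Q}_\omega$ once, check $(\Xi_\omega,\mathcal{Q}_\omega')=-(\Lambda\mathcal{Q}_\omega,\mathcal{Q}_\omega)=0$, peel off the constant $\tfrac12\int\mathcal{Q}_\omega$ at $-\infty$ with a cutoff, then invert $L_\omega$ by variation of parameters and normalize against $\mathcal{Q}_\omega'$) is essentially the argument given in those references. The sketch is sound, including the preservation of orthogonality after subtracting $L_\omega[\tfrac12(\int\mathcal{Q}_\omega)(1-\chi)]$ (which follows from $L_\omega\mathcal{Q}_\omega'=0$ and self-adjointness) and the reduction of $(P_\omega,\mathcal{Q}_\omega)=\tfrac1{16}(\int Q)^2+O(\omega)$ to the explicit $\omega=0$ computation.
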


Now for $|b|\ll1$, $0\leq\omega\ll1$, we let 
\begin{align}
&Q_{b,\omega}(y)=\mathcal{Q}_{\omega}+b\chi(|b|^{\beta}y)P_{\omega}(y),\label{203}\\
&V_b(y)=Q_{b,0}(y),
\end{align}
where $\beta=\frac{3}{4}$.

Then we have the following properties of these two localized profiles:

\begin{lemma}[{Lemma 2.5 in \cite{L3}, Lemma 2.4 in \cite{MMR1}}]
For $|b|\ll1$, $0\leq\omega<\ll1$, there holds:
\begin{enumerate}
	\item Estimates on $Q_b$: For all $y\in\mathbb{R}$, $k\in\mathbb{N}$,
	\begin{align}
	&|Q_{b,\omega}(y)|\lesssim e^{-|y|}+|b|\big(\mathbf{1}_{[-2,0]}(|b|^{\beta}y)+e^{-\frac{|y|}{2}}\big),\label{201}\\
	&|\partial_{y}^kQ_{b,\omega}(y)|\lesssim e^{-|y|}+|b|e^{-\frac{|y|}{2}}+|b|^{1+k\beta}\mathbf{1}_{[-2,-1]}(|b|^{\beta}y),\label{202}
	\end{align}
	where $\mathbf{1}_{I}$ denotes the characteristic function of the interval $I$.
	\item Equation of $Q_{b,w}$: Let
	\begin{equation}\label{CAP}
	-\Psi_{b,\omega}=b\Lambda Q_{b,\omega}+\big(Q_{b,\omega}''-Q_{b,\omega}+Q_{b,\omega}^5-\omega Q_{b,\omega}|Q_{b,\omega}|^{q-1}\big)'.
	\end{equation}
	Then, for all $y\in\mathbb{R}$,
	\begin{align}
	-\Psi_{b,\omega}=&b^2\big((10\mathcal{Q}_{\omega}^3P_{\omega}^2)_{y}+\Lambda P_{\omega}\big)-\frac{1}{2}b^2(1-\chi_{b})P_{\omega}\nonumber\\
	&+O\Big(|b|^{1+\beta}\mathbf{1}_{[-2,-1]}(|b|^{\beta}y)+b^2(\omega+|b|)e^{-\frac{|y|}{2}}\Big).\label{23}
	\end{align}
	Moreover, we have
	\begin{equation}\label{24}
	|\partial_{y}\Psi_{b,\omega}(y)|\lesssim |b|^{1+2\beta}\mathbf{1}_{[-2,-1]}(|b|^{\beta}y)+b^2e^{-\frac{|y|}{2}}.
	\end{equation}
	\item Mass and energy properties of $Q_{b,\omega}$:
	\begin{gather}
	\Bigg|\int Q_{b,\omega}^2-\bigg(\int \mathcal{Q}_{\omega}^2+2b\int P_{\omega}\mathcal{Q}_{\omega}\bigg)\Bigg|\lesssim |b|^{2-\beta},\label{25}\\
	|E(Q_{b,\omega})|\lesssim |b|+\omega.\label{26}
	\end{gather}
\end{enumerate}
\end{lemma}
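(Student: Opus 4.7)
The plan is to treat the three items in sequence, each reducing to the explicit formula $Q_{b,\omega}=\mathcal{Q}_{\omega}+b\chi(|b|^{\beta}y)P_{\omega}(y)$ combined with the structural information already collected for $\mathcal{Q}_{\omega}$ and $P_{\omega}$.

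For the pointwise bounds \eqref{201}--\eqref{202}, I would simply plug in the definition and invoke the properties of $P_{\omega}$ from the previous lemma. The term $\mathcal{Q}_{\omega}$ is exponentially decaying (it is a radial ground state of a Schr\"odinger-type ODE, hence $\lesssim e^{-|y|}$). The term $b\chi P_{\omega}$ is supported (through $\chi$) in $|b|^{\beta}y\in[-2,0]$ where $P_{\omega}$ is uniformly bounded, giving the $|b|\mathbf{1}_{[-2,0]}$ contribution, while on $y>0$ the stronger exponential bound on $P_{\omega}$ yields the $|b|e^{-|y|/2}$ piece. For the $k$-th derivative, the Leibniz rule produces either extra factors of $|b|^{\beta}$ (from derivatives of $\chi$, localized on $[-2,-1]$) or decaying factors from $\partial_{y}^{k}P_{\omega}$; combining these with the exponential decay of $\partial_{y}^{k}\mathcal{Q}_{\omega}$ gives \eqref{202}.

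For the equation \eqref{23}--\eqref{24} I would expand
\[
Q_{b,\omega}''-Q_{b,\omega}+Q_{b,\omega}^{5}-\omega Q_{b,\omega}|Q_{b,\omega}|^{q-1}
\]
to second order in $b$. The order-zero part vanishes identically by \eqref{CME0}. At order $b$, after taking the $x$-derivative, the linear contribution is $b\,(L_{\omega}(\chi P_{\omega}))'$, and the $b\Lambda Q_{b,\omega}$ piece contributes $b\Lambda\mathcal{Q}_{\omega}$ at leading order; using $(L_{\omega}P_{\omega})'=\Lambda\mathcal{Q}_{\omega}$ from \eqref{21} and writing $\chi P_{\omega}=P_{\omega}-(1-\chi)P_{\omega}$, the ``$P_{\omega}$'' part cancels exactly while the ``$(1-\chi)P_{\omega}$'' part is placed in the $-\tfrac12 b^{2}(1-\chi_{b})P_{\omega}$ remainder plus contributions from $\chi'$ that live on $[-2,-1]$ and are absorbed into the $|b|^{1+\beta}\mathbf{1}_{[-2,-1]}$ error (each derivative of $\chi$ costs a $|b|^{\beta}$). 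At order $b^{2}$, the nonlinear expansion produces the quadratic term $(10\mathcal{Q}_{\omega}^{3}P_{\omega}^{2})_{y}$ plus $b\Lambda(b\chi P_{\omega})=b^{2}\Lambda P_{\omega}$ up to a cutoff correction. The $\omega$-dependent saturated nonlinearity contributes $O(\omega b^{2}e^{-|y|/2})$ since it already carries a factor $\omega|\mathcal{Q}_{\omega}|^{q-1}$, and cubic or higher terms give the $|b|^{3}e^{-|y|/2}$ piece; together these furnish the error term in \eqref{23}. Differentiating one more time and noting that each $\partial_{y}$ hitting $\chi$ yields an extra $|b|^{\beta}$ (and localization on $[-2,-1]$) produces \eqref{24}.

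The mass and energy estimates \eqref{25}--\eqref{26} are direct. Squaring and integrating $Q_{b,\omega}=\mathcal{Q}_{\omega}+b\chi P_{\omega}$ gives the cross term $2b\int\chi P_{\omega}\mathcal{Q}_{\omega}$; the substitution $\chi=1-(1-\chi)$ and the exponential decay of $\mathcal{Q}_{\omega}P_{\omega}$ shows $\int(1-\chi)P_{\omega}\mathcal{Q}_{\omega}$ is exponentially small in $|b|^{-\beta}$, which is $\ll|b|^{2-\beta}$, and the quadratic term $b^{2}\int\chi^{2}P_{\omega}^{2}$ is $\lesssim b^{2}\cdot|b|^{-\beta}=|b|^{2-\beta}$ because $P_{\omega}$ stays bounded on the support of $\chi$ of length $\lesssim|b|^{-\beta}$. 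For the energy $E(Q_{b,\omega})$, the Hamiltonian is evaluated on the profile and bounded by the same expansion, the worst contribution being $|b|$ from $\int(\chi P_{\omega})_{y}^{2}\lesssim |b|$ plus the $\omega$-dependent correction from the saturated term.

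The main obstacle in my view is the bookkeeping for the cutoff: one must carefully isolate the terms coming from $\chi'$ and $\chi''$, track that they are localized on $|b|^{\beta}y\in[-2,-1]$ with matching powers of $|b|^{\beta}$, and verify that the cross terms between the cutoff corrections and the nonlinearity do not destroy the $b^{2}$ structure of the leading part of $\Psi_{b,\omega}$. Beyond that, everything reduces to Taylor expansion against the two ODEs \eqref{CME0} and \eqref{21}.
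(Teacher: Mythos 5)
The paper gives no proof of this lemma itself (its ``proof'' is the single line ``See Lemma 2.4 in \cite{MMR1} and Lemma 2.5 in \cite{L3}''), and your argument --- Taylor expansion of $Q_{b,\omega}=\mathcal{Q}_{\omega}+b\chi_bP_{\omega}$ with order-zero cancellation from \eqref{CME0}, order-$b$ cancellation from $(L_{\omega}P_{\omega})'=\Lambda\mathcal{Q}_{\omega}$, and cutoff errors localized on $|b|^{\beta}y\in[-2,-1]$ costing a factor $|b|^{\beta}$ per derivative of $\chi$ --- is exactly the argument of those references, and it is correct. One harmless bookkeeping slip: the $-\tfrac12 b^{2}(1-\chi_{b})P_{\omega}$ term does not come from the linear part $-b\big(L_{\omega}((\chi_{b}-1)P_{\omega})\big)'$ (which, after the outer derivative, only yields $|b|^{1+\beta}$-localized and exponentially small contributions, since every surviving term carries either a derivative of $\chi_b$, a derivative of $P_\omega$, or a factor of $\mathcal{Q}_\omega$ evaluated where $y\lesssim -|b|^{-\beta}$), but rather from $b\Lambda(b\chi_{b}P_{\omega})=b^{2}\chi_{b}\Lambda P_{\omega}+b^{2}y\chi_{b}'P_{\omega}$ upon writing $\chi_{b}\Lambda P_{\omega}=\Lambda P_{\omega}-(1-\chi_{b})\big(\tfrac12 P_{\omega}+yP_{\omega}'\big)$ and discarding the exponentially small $(1-\chi_{b})yP_{\omega}'$.
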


\begin{proof}
See Lemma 2.4 in \cite{MMR1} and Lemma 2.5 in \cite{L3}.
\end{proof}

\subsection{Geometrical decomposition of the flow and modulation estimates}\label{S22}
For simplicity, from this subsection, we fix a $u_0\in\mathcal{A}_{\alpha_0}$ such that the corresponding solution $u(t)$ to \eqref{CP} belongs to the (Blow-up) regime as in Theorem \ref{T1}. We denote by $T<+\infty$ its blow-up time. We also let $u_{\gamma}(t)$ be the corresponding solution to \eqref{CPG}, which belongs to the (Soliton) regime as indicated in Theorem \ref{PT} for $\gamma>0$ small enough.

\begin{lemma}[{Geometrical decomposition for $u(t)$, Lemma 2.5 in \cite{MMR1}}]
There exist three $C^1$ functions $(\lambda,x,b):\; [0,T)\rightarrow(0,+\infty)\times \mathbb{R}^2$, such that for all $t\in[0,T)$, there holds
\begin{equation}\label{GD}
u(t,x)=\frac{1}{\lambda^{\frac{1}{2}}(t)}\big[V_{b(t)}+\varepsilon(t)\big]\bigg(\frac{x-x(t)}{\lambda(t)}\bigg),
\end{equation}
with $\varepsilon(t)$ satisfying the following orthogonality conditions
\begin{equation}\label{OC}
(\varepsilon(t),Q)=(\varepsilon(t),\Lambda Q)=(\varepsilon(t),y\Lambda Q)=0,
\end{equation}
for all $t\in[0,T)$.
\end{lemma}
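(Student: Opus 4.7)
The strategy is the standard modulation argument via the implicit function theorem applied to the three-parameter family $(\lambda,x,b)$. By the (Blow up) alternative of Theorem \ref{T1}, $u(t)\in\mathcal{T}_{\alpha^*}$ for every $t\in[0,T)$, so one can select preliminary parameters $(\tilde\lambda(t),\tilde x(t))$ with
$$\|u(t)-\tilde\lambda^{-1/2}(t)\,Q((\cdot-\tilde x(t))/\tilde\lambda(t))\|_{L^2}<\alpha^*.$$
The task is to refine these to $C^1$ parameters $(\lambda(t),x(t),b(t))$, and crucially to introduce the third direction $b$, so that the remainder
$$\varepsilon(t,y)=\lambda^{1/2}(t)u(t,\lambda(t)y+x(t))-V_{b(t)}(y)$$
satisfies the three orthogonality conditions \eqref{OC}.

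Concretely, for $(\lambda,x,b)$ near $(\tilde\lambda,\tilde x,0)$ I define the map
$$\Phi_t(\lambda,x,b):=\bigl((\varepsilon_{\lambda,x,b},Q),\,(\varepsilon_{\lambda,x,b},\Lambda Q),\,(\varepsilon_{\lambda,x,b},y\Lambda Q)\bigr),$$
with $\varepsilon_{\lambda,x,b}(y)=\lambda^{1/2}u(t,\lambda y+x)-V_b(y)$. The central computation is the invertibility of $D\Phi_t$ at the base configuration, evaluated in the linearization where $u(t)=\tilde\lambda^{-1/2}Q((\cdot-\tilde x)/\tilde\lambda)$ and $\varepsilon=0$. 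Direct differentiation yields
$$\partial_\lambda\varepsilon=\tilde\lambda^{-1}\Lambda Q,\qquad \partial_x\varepsilon=\tilde\lambda^{-1}Q',\qquad \partial_b\varepsilon\big|_{b=0}=-P.$$
Combining the $L^2$-critical identity $(\Lambda Q,Q)=0$ with the parity relations $(Q',Q)=0$ and $(\Lambda Q,y\Lambda Q)=0$, the matrix $D\Phi_t$ reduces to a triangular-type form whose determinant is proportional to $(P,Q)\cdot(\Lambda Q,\Lambda Q)\cdot(Q',y\Lambda Q)$. The first factor is non-zero by $(P,Q)=\tfrac{1}{16}(\int Q)^2$ from \eqref{22}, the second is strictly positive, and the third is a fixed non-zero constant computable by integration by parts. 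This gives non-degeneracy of $D\Phi_t$ uniformly on the admissible set.

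The implicit function theorem then produces unique parameters $(\lambda(t),x(t),b(t))$ in a neighborhood of $(\tilde\lambda(t),\tilde x(t),0)$ solving $\Phi_t=0$, with $|b(t)|\ll 1$ because $\alpha^*\ll 1$, so the localized profile $V_{b(t)}$ remains in its valid range. For $C^1$ regularity in $t$ I would use that $u\in\mathcal{C}([0,T),H^1)$ is a strong solution of \eqref{CP}, hence $\partial_t u=-u_{xxx}-(u^5)_x\in\mathcal{C}([0,T),H^{-2})$; combined with the smooth dependence of $\Phi_t$ on $u$, the IFT formula transfers $C^1$ regularity in $t$ to the parameters.

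The main (indeed only) obstacle is the Jacobian non-degeneracy. Once the orthogonality directions $\{Q,\Lambda Q,y\Lambda Q\}$ are chosen as above, this boils down to the algebraic/parity identities just listed together with the crucial non-vanishing $(P,Q)\neq 0$, which is built into the profile $P$ through $(LP)'=\Lambda Q$ and the computation in \eqref{22}. The statement is then essentially a transcription of Lemma 2.5 in \cite{MMR1}, and no further analytic difficulty enters.
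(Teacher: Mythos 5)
Your proposal is correct and follows the same route as the paper, which states this lemma without proof by citing Lemma 2.5 of \cite{MMR1}: the argument there is precisely the implicit-function-theorem modulation computation you describe, with the non-degeneracy of the Jacobian resting on $(P,Q)\neq0$ from \eqref{22} together with the parity identities $(\Lambda Q,Q)=(Q',Q)=(Q',\Lambda Q)=(\Lambda Q,y\Lambda Q)=0$ and $(Q',y\Lambda Q)\neq0$. The one point to phrase more carefully is that $P\notin L^2$ (it has the nonzero limit $\frac{1}{2}\int Q$ as $y\to-\infty$), so the identity $\partial_b\varepsilon|_{b=0}=-P$ must be understood at the level of the scalar map $\Phi_t$ — which is differentiable in $b$ because $Q$, $\Lambda Q$, $y\Lambda Q$ decay exponentially and the cutoff $\chi(|b|^{\beta}y)$ contributes only $o(1)$ to these pairings — rather than as an $L^2$-valued derivative of $V_b$.
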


\begin{lemma}[{Geometrical decomposition for $u_{\gamma}(t)$, Lemma 2.6 in \cite{L3}}]
For $q>5$ and $\gamma>0$ small enough, there exist $C^1$ functions $(\lambda_{\gamma},x_{\gamma},b_{\gamma}):\; [0,+\infty)\rightarrow(0,+\infty)\times \mathbb{R}^2$, such that for all $t\in[0,+\infty)$, there holds
\begin{equation}\label{GDG}
u_{\gamma}(t,x)=\frac{1}{\lambda_{\gamma}^{\frac{1}{2}}(t)}\big[Q_{b_{\gamma}(t),\omega_{\gamma}(t)}+\varepsilon_{\gamma}(t)\big]\bigg(\frac{x-x_{\gamma}(t)}{\lambda_{\gamma}(t)}\bigg),
\end{equation}
where%
\footnote{Recall $m=(q-5)/2$ is defined in \eqref{101}.} 
$$\omega_{\gamma}(t)=\frac{\gamma}{\lambda^m_{\gamma}(t)}.$$
And $\varepsilon_{\gamma}(t)$ satisfies the following orthogonality conditions
\begin{equation}\label{OCG}
(\varepsilon_{\gamma}(t),\mathcal{Q}_{\omega(t)})=(\varepsilon_{\gamma}(t),\Lambda \mathcal{Q}_{\omega(t)})=(\varepsilon_{\gamma}(t),y\Lambda \mathcal{Q}_{\omega(t)})=0,
\end{equation}
for all $t\in[0,+\infty)$.

Moreover, for all $t\in[0,T)$, we have%
\footnote{See Lemma 2.6 and (5.17) in \cite{L3}}
\begin{equation}\label{239}
\big(\lambda_{\gamma}(t),b_{\gamma}(t),x_{\gamma}(t),\varepsilon_{\gamma}(t)\big)\xrightarrow{\mathbb{R}^3\times H^1}\big(\lambda(t),b(t),x(t),\varepsilon(t)\big),
\end{equation}
as $\gamma\rightarrow0$.
\end{lemma}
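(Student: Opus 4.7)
The plan is to prove the two assertions separately. First, for each fixed small $\gamma>0$ I would construct the modulated decomposition \eqref{GDG}--\eqref{OCG} pointwise in $t$ via the implicit function theorem, then upgrade to $C^1$ regularity in $t$ by the standard modulation argument. Second, I would deduce the convergence \eqref{239} from continuous dependence of the implicit function on parameters, combined with the $H^1$ convergence $u_\gamma(t)\to u(t)$ on compact subintervals of $[0,T)$, which is a perturbative stability statement for the Cauchy problem on any interval where the unperturbed solution remains smooth.

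For the existence of the decomposition, for $w\in H^1$ close to the soliton manifold and parameters $(\lambda,x,b)$ with $\omega=\gamma/\lambda^m$, set
\[
\tilde\varepsilon(\lambda,x,b;w)(y)=\lambda^{1/2}w(\lambda y+x)-Q_{b,\omega}(y),
\]
and
\[
\Phi_\gamma(\lambda,x,b;w)=\bigl((\tilde\varepsilon,\mathcal{Q}_\omega),\;(\tilde\varepsilon,\Lambda\mathcal{Q}_\omega),\;(\tilde\varepsilon,y\Lambda\mathcal{Q}_\omega)\bigr).
\]
A zero of $\Phi_\gamma$ gives the required decomposition. At the base point $(\lambda,x,b;w)=(1,0,0;Q)$ with $\gamma=0$, one has $\tilde\varepsilon\equiv 0$ and $\partial_\lambda\tilde\varepsilon=\Lambda Q$, $\partial_x\tilde\varepsilon=Q'$, $\partial_b\tilde\varepsilon=-P_0$. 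Using the classical identities $(\Lambda Q,Q)=0$, $(Q',Q)=0$, $(Q',\Lambda Q)=0$ (parity), together with $(\Lambda Q,\Lambda Q)>0$, $(P_0,Q)=\tfrac{1}{16}(\int Q)^2\neq 0$, and $(Q',y\Lambda Q)\neq 0$, the $3\times 3$ Jacobian of $\Phi_0$ in $(\lambda,x,b)$ is (after reordering) triangular with nonzero diagonal, hence invertible; this is the same computation that underlies the orthogonality choice in Lemma~2.5 of \cite{MMR1}. By the smoothness of $(\mathcal{Q}_\omega,P_\omega)$ in $\omega$ stated in Section~\ref{S21}, invertibility persists uniformly for $\omega$ small. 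Since $u_\gamma(t)\in\mathcal{T}_{\alpha^*,\gamma}$ for all $t\geq 0$ by hypothesis, the IFT produces $(\lambda_\gamma(t),x_\gamma(t),b_\gamma(t))$ at each $t$. The $C^1$ regularity is then obtained by differentiating the orthogonality relations \eqref{OCG} in $t$ and inserting the equation for $\varepsilon_\gamma$ derived from \eqref{CPG} and \eqref{GDG}; this expresses $(\dot\lambda_\gamma/\lambda_\gamma,\dot x_\gamma/\lambda_\gamma-1,\dot b_\gamma)$ as the solution of a linear system governed by the same invertible Jacobian plus an error controlled by the $L^2$ smallness of $\varepsilon_\gamma$.

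For the convergence \eqref{239}, note that at $\gamma=0$ the map $\Phi_0$ coincides with the one used to construct the decomposition \eqref{GD}--\eqref{OC} for $u(t)$, since $Q_{b,0}=V_b$ and $\mathcal{Q}_0=Q$. For each $t\in[0,T)$ the $H^1$ convergence $u_\gamma(t)\to u(t)$ holds by standard perturbative estimates (the saturated correction $\gamma u|u|^{q-1}$ is a lower-order term that vanishes in $\gamma$, and the unperturbed flow is smooth on $[0,T)$). Continuity of the implicit solution in $(\gamma,w)$ then delivers $(\lambda_\gamma(t),x_\gamma(t),b_\gamma(t))\to(\lambda(t),x(t),b(t))$ in $\mathbb{R}^3$, and substituting back into \eqref{GDG}--\eqref{GD}, together with the smoothness of $Q_{b,\omega}$ in $(b,\omega)$, yields $\varepsilon_\gamma(t)\to\varepsilon(t)$ in $H^1$.

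The main obstacle I anticipate is purely bookkeeping: when differentiating $\Phi_\gamma$ in $\lambda$ one must account for the $\lambda$-dependence of $\omega=\gamma/\lambda^m$, which produces an additional term of order $\gamma\lambda^{-m-1}$; this term vanishes at $\gamma=0$ and therefore remains a small perturbation of the nondegenerate Jacobian from the unperturbed analysis for $\gamma$ sufficiently small, so the IFT applies uniformly on the range of $\lambda$ relevant to the (Soliton) regime.
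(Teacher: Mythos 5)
Your proposal is correct and follows essentially the same route as the source: the paper itself gives no proof of this lemma but quotes it from Lemma~2.6 and (5.17) of \cite{L3}, where the decomposition \eqref{GDG}--\eqref{OCG} is obtained exactly by the implicit function theorem around the (non-degenerate, triangular) Jacobian you compute, and the convergence \eqref{239} follows from $H^1$ perturbation theory for the Cauchy problem on compact subintervals of $[0,T)$ together with continuity and local uniqueness of the implicit function (which, as you note, reduces the $\gamma>0$ orthogonality conditions to the $\gamma=0$ ones of Lemma~2.5 in \cite{MMR1} since $\omega_\gamma(t)=\gamma/\lambda_\gamma^m(t)\to 0$ when $\lambda_\gamma(t)$ stays bounded below). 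The only points you pass over lightly --- the uniform-in-$\gamma$ $H^1$ bound on $[0,t]$ needed to run the perturbation argument (the crude global bound degenerates as $\gamma\to0$, so one must bootstrap from proximity to $u$), and the continuity-in-$t$ argument identifying the globally defined modulation parameters with the local IFT branch --- are standard and handled in the cited reference.
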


\begin{remark}
Generally, for $u_0\in\mathcal{A}_{\alpha_0}$, the geometrical decomposition \eqref{GD} and \eqref{GDG} may not hold true for all $t\in[0,T_{u_0})$, where $T_{u_0}$ is the maximal lifespan. But  for $u_0\in\mathcal{A}_{\alpha_0}$, such that $u(t)$ is in the (Blow-up) regime and $u_{\gamma}(t)$ is in the (Soliton) regime, this fact holds true. This follows from a bootstrap argument, which is the heart of the proof of Theorem \ref{T1} and Theorem \ref{PT}. For simplicity, we ignore the bootstrap argument and focus on such $u_0$ only.
\end{remark}

For $(\lambda_{\gamma},x_{\gamma},b_{\gamma})$, we have:
\begin{proposition}[{Modulation estimates for $u_{\gamma}(t)$, Proposition 2.9 in \cite{L3}}]\label{P2}
We let
$$s(t)=\int_{0}^t\frac{1}{\lambda_{\gamma}^3(\tau)}\,d\tau,\quad y=\frac{x-x_{\gamma}(t)}{\lambda_{\gamma}(t)}.$$
Then we have:
\begin{enumerate}
	\item {\rm(Equation of $\varepsilon_{\gamma}$)}: For all $s\in[0,+\infty)$,
	\begin{align}\label{213}
	\partial_s\varepsilon_{\gamma}=&(L_{\omega_{\gamma}}\varepsilon_{\gamma})_{y}-b_{\gamma}\Lambda\varepsilon_{\gamma}+\bigg(\frac{\partial_s\lambda_{\gamma}}{\lambda_{\gamma}}+b_{\gamma}\bigg)(\Lambda Q_{b_{\gamma},\omega_{\gamma}}+\Lambda\varepsilon_{\gamma})\nonumber\\
    &+\bigg(\frac{\partial_sx_{\gamma}}{\lambda_{\gamma}}-1\bigg)(Q_{b_{\gamma},\omega_{\gamma}}+\varepsilon_{\gamma})_{y}-\partial_sb_{\gamma}\frac{\partial Q_{b_{\gamma},\omega_{\gamma}}}{\partial b_{\gamma}}-\partial_s\omega_{\gamma}\frac{\partial Q_{b_{\gamma},\omega_{\gamma}}}{\partial \omega_{\gamma}}\nonumber\\
    &+\Psi_{b_{\gamma},\omega_{\gamma}}-(R_{b_{\gamma}}^{\gamma}(\varepsilon_{\gamma}))_{y}-(R^{\gamma}_{\rm NL}(\varepsilon_{\gamma}))_{y},
	\end{align}
	where
	\begin{align}
	&\Psi_{b_{\gamma},\omega_{\gamma}}=-b_{\gamma}\Lambda Q_{b_{\gamma},\omega_{\gamma}}-\big(Q_{b_{\gamma},\omega_{\gamma}}''-Q_{b_{\gamma},\omega_{\gamma}}+Q_{b_{\gamma},\omega_{\gamma}}^5-\omega_{\gamma} Q_{b_{\gamma},\omega_{\gamma}}|Q_{b_{\gamma},\omega_{\gamma}}|^{q-1}\big)',\\
	&R_{b_{\gamma}}^{\gamma}(\varepsilon_{\gamma})=5(Q_{b_{\gamma},\omega_{\gamma}}^4-\mathcal{Q}_{\omega_{\gamma}}^4)\varepsilon_{\gamma}-q\omega_{\gamma}(|Q_{b_{\gamma},\omega_{\gamma}}|^{q-1}-|\mathcal{Q}_{\omega_{\gamma}}|^{q-1})\varepsilon_{\gamma},\\
	&R^{\gamma}_{\rm NL}(\varepsilon_{\gamma})=(\varepsilon_{\gamma}+Q_{b_{\gamma},\omega_{\gamma}})^5-5Q_{b_{\gamma},\omega_{\gamma}}^{4}\varepsilon_{\gamma}-Q_{b_{\gamma},\omega_{\gamma}}^5\nonumber\\
    &\quad-\omega_{\gamma}\big[(\varepsilon_{\gamma}+Q_{b_{\gamma},\omega_{\gamma}})|\varepsilon_{\gamma}+Q_{b_{\gamma},\omega_{\gamma}}|^{q-1}-Q_{b_{\gamma},\omega_{\gamma}}|Q_{b_{\gamma},\omega_{\gamma}}|^{q-1}-q\varepsilon_{\gamma}|Q_{b_{\gamma},\omega_{\gamma}}|^{q-1}\big].
	\end{align}
	\item {\rm (Estimates induced by the conservation laws)}. For $s\in[0,+\infty)$, there holds:
	\begin{gather}
	\|\varepsilon_{\gamma}\|_{L^2}\lesssim|b_{\gamma}|^{\frac{1}{4}}+\omega_{\gamma}^{\frac{1}{2}}+\bigg|\int u^2_0-\int Q^2\bigg|^{\frac{1}{2}},	\label{MC}\\
	\frac{\|\partial_y\varepsilon_{\gamma}\|_{L^2}^2}{\lambda_{\gamma}^2}\lesssim\frac{1}{\lambda_{\gamma}^2}\bigg(\omega_{\gamma}+|b_{\gamma}|+\int\varepsilon_{\gamma}^2e^{-\frac{|y|}{10}}\bigg)+\gamma\frac{\|\partial_y\varepsilon_{\gamma}\|_{L^2}^{m+2}}{\lambda_{\gamma}^{m+2}}+|E^{\gamma}_0|.\label{EC}
	\end{gather}
	\item {\rm ($H^1$ modulation equation)}. For all $s\in[0,+\infty)$,
	\begin{align}
	&\bigg|\frac{\partial_s\lambda_{\gamma}}{\lambda_{\gamma}}+b_{\gamma}\bigg|+\bigg|\frac{\partial_sx_{\gamma}}{\lambda_{\gamma}}-1\bigg|\lesssim \bigg(\int\varepsilon_{\gamma}^2e^{-\frac{|y|}{10}}\bigg)^{\frac{1}{2}}+|b_{\gamma}|(\omega_{\gamma}+|b_{\gamma}|),\label{MS1} \\
	&|\partial_sb_{\gamma}|+|\partial_s\omega_{\gamma}|\lesssim(\omega_{\gamma}+|b_{\gamma}|)\Bigg[\bigg(\int\varepsilon_{\gamma}^2e^{-\frac{|y|}{10}}\bigg)^{\frac{1}{2}}+|b_{\gamma}|\Bigg]+\int\varepsilon_{\gamma}^2e^{-\frac{|y|}{10}}.\label{MS2}
	\end{align}
	\item {\rm ($L^1$ control on the right)}. Let
    \begin{align}
    &\rho_1(y)=\frac{4}{(\int Q)^2}\int_{-\infty}^{y}\Lambda Q,\label{27}\\
    &\rho_2=\frac{16}{(\int Q)^2}\bigg(\frac{(\Lambda P,Q)}{\|\Lambda Q\|^2_{L^2}}\Lambda Q+P-\frac{1}{2}\int Q\bigg)-8\rho_1,\label{28}\\
    &\rho=4\rho_1+\rho_2,\label{29}
    \end{align}
    and
    \begin{equation}\label{214}
    J_1^{\gamma}(s)=(\varepsilon_{\gamma}(s),\rho_1),\;J_2^{\gamma}(s)=(\varepsilon_{\gamma}(s),\rho_2),\; J^{\gamma}(s)=(\varepsilon_{\gamma}(s),\rho),
    \end{equation}
    where $\rho_1$, $\rho_2$, $\rho$ were defined in \eqref{27}--\eqref{29}. Then we have:
	\begin{enumerate}
		\item {\rm (Law of $\lambda_{\gamma}$):} for all $s\in[0,+\infty)$,
		\begin{align}
		&\bigg|\frac{\partial_s\lambda_{\gamma}}{\lambda_{\gamma}}+b_{\gamma}-2\bigg((J^{\gamma}_1)_s+\frac{1}{2}\frac{\partial_s\lambda_{\gamma}}{\lambda_{\gamma}}J^{\gamma}_1\bigg)\bigg|\nonumber\\
		&\lesssim (\omega_{\gamma}+|b_{\gamma}|)\Bigg[\bigg(\int\varepsilon_{\gamma}^2e^{-\frac{|y|}{10}}\bigg)^{\frac{1}{2}}+|b_{\gamma}|\Bigg]+\int\varepsilon_{\gamma}^2e^{-\frac{|y|}{10}}.\label{LOL}
		\end{align}
		\item {\rm (Law of $b_{\gamma}$):} for all $s\in[0,+\infty)$,
		\begin{align}
		&\bigg|\partial_sb_{\gamma}+2b_{\gamma}^2+\partial_s\omega_{\gamma}G'(\omega_{\gamma})+b_{\gamma}\bigg((J^{\gamma}_2)_s+\frac{1}{2}\frac{\partial_s\lambda_{\gamma}}{\lambda_{\gamma}}J^{\gamma}_2\bigg)\bigg|\nonumber\\
		&\lesssim \int \varepsilon_{\gamma}^2e^{-\frac{|y|}{10}}+(\omega_{\gamma}+|b_{\gamma}|)b_{\gamma}^2,\label{LOB}
		\end{align}
		where $G\in C^2$ with $G(0)=0$, $G'(0)=c_0>0$, for some universal constant $c_0$.
		\item {\rm (Law of $\frac{b_{\gamma}}{\lambda_{\gamma}^2}$):} for all $s\in[0,+\infty)$,
		\begin{align}
		&\bigg|\frac{d}{ds}\bigg(\frac{b_{\gamma}}{\lambda_{\gamma}^2}\bigg)+\frac{b_{\gamma}}{\lambda_{\gamma}^2}\bigg(J^{\gamma}_s+\frac{1}{2}\frac{\partial_s\lambda_{\gamma}}{\lambda}J^{\gamma}\bigg)+\frac{\partial_s\omega_{\gamma}G'(\omega_{\gamma})}{\lambda_{\gamma}^2}\bigg|\nonumber\\
		&\lesssim \frac{1}{\lambda_{\gamma}^2}\bigg(\int \varepsilon_{\gamma}^2e^{-\frac{|y|}{10}}+(\omega_{\gamma}+|b_{\gamma}|)b_{\gamma}^2\bigg).\label{LOBOLS}
		\end{align}
	\end{enumerate}
\end{enumerate}
\end{proposition}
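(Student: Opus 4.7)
The plan is to adapt the modulation analysis framework of \cite{MMR1} to the saturated setting, tracking the extra nonlinearity $\gamma u|u|^{q-1}$ through every computation and using the constraint $\omega_\gamma=\gamma/\lambda_\gamma^m$ to eliminate $\omega_\gamma$ as an independent parameter, so that only three modulation parameters $(\lambda_\gamma,x_\gamma,b_\gamma)$ are free. First, I would substitute \eqref{GDG} into \eqref{CPG}, pass to the rescaled variables $(s,y)$ introduced in the statement, and expand every nonlinear term around $Q_{b_\gamma,\omega_\gamma}$. Combining with the profile identity \eqref{CAP} produces the forcing $\Psi_{b_\gamma,\omega_\gamma}$; the linear contribution that is not captured by the convenient operator $L_{\omega_\gamma}$ (which is linearized at $\mathcal{Q}_{\omega_\gamma}$, not at $Q_{b_\gamma,\omega_\gamma}$) is gathered into $R_{b_\gamma}^\gamma(\varepsilon_\gamma)$, and the genuinely superlinear remainder into $R_{\mathrm{NL}}^\gamma(\varepsilon_\gamma)$. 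This yields \eqref{213} directly. The $H^1$ bounds \eqref{MS1}--\eqref{MS2} then follow by differentiating each of the three orthogonality conditions \eqref{OCG} in $s$, substituting \eqref{213}, and solving the resulting $3\times 3$ linear system in $(\partial_s\lambda_\gamma/\lambda_\gamma+b_\gamma,\,\partial_sx_\gamma/\lambda_\gamma-1,\,\partial_sb_\gamma)$; the coefficient matrix is a small perturbation of its value at $(b,\omega)=(0,0)$, which is invertible by the standard ground-state identities. The RHS is estimated by Cauchy--Schwarz with the weight $e^{-|y|/10}$ inherited from the exponential decay of $\mathcal{Q}_{\omega_\gamma}$, $\Lambda\mathcal{Q}_{\omega_\gamma}$ and $y\Lambda\mathcal{Q}_{\omega_\gamma}$, combined with the source bounds \eqref{23}--\eqref{24}.

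The conservation-law estimates \eqref{MC}--\eqref{EC} are then obtained by inserting \eqref{GDG} into $M(u_\gamma)=M_0$ and $E^\gamma(u_\gamma)=E_0^\gamma$; the orthogonality $(\varepsilon_\gamma,\mathcal{Q}_{\omega_\gamma})=0$ kills the crossed term at leading order, and \eqref{25}--\eqref{26} control the profile contribution. The term $\gamma\|\partial_y\varepsilon_\gamma\|_{L^2}^{m+2}/\lambda_\gamma^{m+2}$ in \eqref{EC} arises from the $L^{q+1}$ piece of $E^\gamma$ after applying Gagliardo--Nirenberg and the pseudo-scaling rule. For the sharp laws \eqref{LOL}, \eqref{LOB}, \eqref{LOBOLS}, the idea is to take the $L^2$ pairing of \eqref{213} with the specifically designed profiles $\rho_1,\rho_2,\rho$ from \eqref{27}--\eqref{29}. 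After integration by parts in the $(L_{\omega_\gamma}\varepsilon_\gamma)_y$ term, one derivative is transferred onto $\rho_i$, and the combination $(J_i^\gamma)_s+\tfrac{1}{2}(\partial_s\lambda_\gamma/\lambda_\gamma)J_i^\gamma$ emerges naturally from the commutator with the rescaling. The $\rho_i$ are constructed precisely so that $L\rho_i$ has the required projection onto $\Lambda Q$ (and for $\rho_2$, onto $P$), so that the modulation terms in \eqref{213} contract cleanly and isolate the desired ODEs $\partial_s\lambda_\gamma/\lambda_\gamma\approx -b_\gamma+\cdots$, $\partial_sb_\gamma\approx-2b_\gamma^2-\partial_s\omega_\gamma G'(\omega_\gamma)+\cdots$. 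The $\omega_\gamma$-dependence of $\mathcal{Q}_{\omega_\gamma}$ feeds into the coefficient $G'(\omega_\gamma)$ via $\partial_\omega\mathcal{Q}_\omega$, obtained by differentiating \eqref{CME0} in $\omega$; $G\in C^2$, $G(0)=0$, and $G'(0)=c_0>0$ is identified by a direct Pohozaev-type computation.

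The main obstacle is the sharp projected law \eqref{LOBOLS} with its sign-definite correction $G'(0)=c_0>0$: this sign is exactly what discriminates between the (Soliton) and (Blow down) regimes in Theorem \ref{PT}, so it must be pinned down without algebraic slack. Concretely, one has to match the linear contribution of $q\omega_\gamma|\mathcal{Q}_{\omega_\gamma}|^{q-1}$ inside $L_{\omega_\gamma}$ against the $\omega$-derivative of $\mathcal{Q}_\omega$ and the normalization $(P,Q)=\tfrac{1}{16}(\int Q)^2+O(\omega)$ that is built into the definition of $\rho_2$ in \eqref{28}; any algebraic inaccuracy propagates into $G'(0)$ and destroys the classification. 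A secondary challenge is to confine all nonresonant contributions of $R_{b_\gamma}^\gamma(\varepsilon_\gamma)$, $R_{\mathrm{NL}}^\gamma(\varepsilon_\gamma)$ and $\Psi_{b_\gamma,\omega_\gamma}$ inside the target error $\int\varepsilon_\gamma^2e^{-|y|/10}+(\omega_\gamma+|b_\gamma|)b_\gamma^2$, for which the localization $\chi(|b|^\beta y)$ in \eqref{203} with $\beta=3/4$ and the sharp pointwise bounds \eqref{23}--\eqref{24} are essential: without the localization, the $\Lambda P_\omega$ tail of $\Psi_{b,\omega}$ would produce an $L^2$-unbounded contribution, and the scale $|b|^\beta$ is the unique choice that keeps this term controllable while preserving the leading-order balance.
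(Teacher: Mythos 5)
This paper does not prove Proposition \ref{P2}: it is imported verbatim from Proposition 2.9 of \cite{L3} (the whole of Section \ref{S2} is explicitly a collection of results quoted from \cite{L3,MMR1} without proof). Your outline is a faithful sketch of the proof actually given in \cite{L3} --- deriving \eqref{213} by substitution of \eqref{GDG} into \eqref{CPG}, inverting the $3\times3$ system obtained by differentiating \eqref{OCG}, using the conservation laws with \eqref{25}--\eqref{26} for \eqref{MC}--\eqref{EC}, and pairing with $\rho_1,\rho_2,\rho$ for the refined laws --- so it is consistent with the source; it is a plan rather than a complete argument, but there is no identifiable gap in the strategy.
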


\subsection{Monotonicity formula and estimate on the error term}\label{S23}
We now recall the the monotonicity formula introduced in \cite{MMR1}, which is the heart of the analysis in \cite{MMR1}. We mention here again, for simplicity, we ignore the bootstrap argument and focus only on the initial data whose corresponding solution $u(t)$ to \eqref{CP} belongs to the (Blow-up) regime as in Theorem \ref{T1}.

More precisely, we let $(\varphi_i)_{i=1,2},\psi\in C^{\infty}(\mathbb{R})$ be such that:
\begin{gather}
\varphi_i(y)=
\begin{cases}
e^y,\text{ for }y<-1,\\
1+y,\text{ for }-\frac{1}{2}<y<\frac{1}{2},\\
y^i,\text{ for }y>2,
\end{cases}
\quad \varphi'(y)>0,\text{ for all }y\in\mathbb{R},\\
\psi(y)=
\begin{cases}
e^{2y},\text{ for }y<-1,\\
1,\text{ for }y>-\frac{1}{2},
\end{cases}
\quad \psi'(y)\geq0,\text{ for all }y\in\mathbb{R}.
\end{gather}  
Let $B>100$ be a large universal constant. We then define the following weight functions:
\begin{equation}
\psi_B(y)=\psi\bigg(\frac{y}{B}\bigg),\quad \varphi_{i,B}(y)=\varphi_i\bigg(\frac{y}{B}\bigg),
\end{equation} 
and the following weighted $H^1$ norm of $\varepsilon$: for all $s\in[0,+\infty)$,
\begin{gather}
\mathcal{N}_i(s)=\int\bigg(\varepsilon_{y}^2(s,y)\psi_B(y)+\varepsilon^2(s,y)\varphi_{i,B}(y)\bigg)\,dy,\quad i=1,2,\\
\mathcal{N}_{i,\rm loc}(s)=\int \varepsilon^2(s,y)\varphi'_{i,B}(y)\,dy,\quad i=1,2.
\end{gather}

Similarly, for $u_{\gamma}(t)$, we define:
\begin{gather}
\mathcal{N}^{\gamma}_i(s)=\int\bigg(\big[\partial_y\varepsilon_{\gamma}(s,y)\big]^2\psi_B(y)+\varepsilon_{\gamma}^2(s,y)\varphi_{i,B}(y)\bigg)\,dy,\quad i=1,2,\\
\mathcal{N}^{\gamma}_{i,\rm loc}(s)=\int \varepsilon_{\gamma}^2(s,y)\varphi'_{i,B}(y)\,dy,\quad i=1,2.
\end{gather}

Then we have the following monotonicity formula for $u_{\gamma}(t)$:
\begin{proposition}[{Monotonicity formula for $u_{\gamma}(t)$, Proposition 3.1 in \cite{L3}}]\label{P4}
For $\gamma>0$ small enough, we define the Lyapounov functional for $(i,j)\in\{1,2\}^2$ as following:
\begin{align}
\label{217}
&\mathcal{F}^{\gamma}_{i,j}(s)=\nonumber\\
&\int\bigg((\partial_y\varepsilon_{\gamma})^2\psi_B+(1+\mathcal{J}^{\gamma}_{i,j})\varepsilon_{\gamma}^2\varphi_{i,B}-\frac{1}{3}\psi_B\big[(Q_{b_{\gamma},\omega_{\gamma}}+\varepsilon_{\gamma})^6-Q_{b_{\gamma},\omega_{\gamma}}^6-6\varepsilon_{\gamma} Q_{b_{\gamma},\omega_{\gamma}}^5\big]\nonumber\\
&\quad+\frac{2\omega_{\gamma}}{q+1}\big[|Q_{b_{\gamma},\omega_{\gamma}}+\varepsilon_{\gamma}|^{q+1}-|Q_{b_{\gamma},\omega_{\gamma}}|^{q+1}-(q+1)\varepsilon_{\gamma} Q_{b_{\gamma},\omega_{\gamma}}|Q_{b_{\gamma},\omega_{\gamma}}|^{q-1}\big]\psi_B\bigg),
\end{align}
with
\begin{equation}
\mathcal{J}^{\gamma}_{i,j}=(1-J^{\gamma}_1)^{-4(j-1)-2i}-1.
\end{equation}
Then the following estimates hold for all $s\in[0,+\infty)$:
\begin{enumerate}
	\item Scaling invariant Lyapounov control: for $i=1,2$,
	\begin{equation}
	\label{MF1}
	\frac{d\mathcal{F}^{\gamma}_{i,1}}{ds}+\mu\int\Big((\partial_y\varepsilon_{\gamma})^2+\varepsilon_{\gamma}^2\Big)\varphi'_{i,B}\lesssim_B b_{\gamma}^2(\omega_{\gamma}^2+b_{\gamma}^2),
	\end{equation}
    where $\mu>0$ is a universal constant.
	\item $H^1$ scaling Lyapounov control: for $i=1,2$,
	\begin{equation}
	\label{MF2}
	\frac{d}{ds}\bigg(\frac{\mathcal{F}^{\gamma}_{i,2}}{\lambda_{\gamma}^2}\bigg)+\frac{\mu}{\lambda_{\gamma}^2}\int\Big((\partial_y\varepsilon_{\gamma})^2+\varepsilon_{\gamma}^2\Big)\varphi'_{i,B}\lesssim_B \frac{b_{\gamma}^2(\omega_{\gamma}^2+b_{\gamma}^2)}{\lambda_{\gamma}^2}.
	\end{equation}
	\item Coercivity and pointwise bounds: there holds for all $(i,j)\in\{1,2\}^2$, and all $s\in[0,+\infty)$,
	\begin{gather}
	\mathcal{N}^{\gamma}_i\lesssim\mathcal{F}^{\gamma}_{i,j}\lesssim \mathcal{N}^{\gamma}_i,\label{COER}\\
	|J^{\gamma}_i|+|\mathcal{J}^{\gamma}_{i,j}|\lesssim(\mathcal{N}^{\gamma}_2)^{\frac{1}{2}}\label{218}.
	\end{gather}
\end{enumerate}
\end{proposition}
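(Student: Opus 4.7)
\medskip
\noindent\textit{Proof proposal.}
The plan is to adapt the monotonicity argument of Martel--Merle--Rapha\"el for \eqref{CP} to the saturated setting \eqref{CPG}, paying special attention to the new terms produced by the $\omega_\gamma$--dependence of the profile $\mathcal{Q}_{\omega_\gamma}$ and the time--dependence $\omega_\gamma(t)=\gamma/\lambda_\gamma^m(t)$. First I would compute $\frac{d}{ds}\mathcal{F}^\gamma_{i,j}$ by substituting the $\varepsilon_\gamma$--equation \eqref{213}. The term $(L_{\omega_\gamma}\varepsilon_\gamma)_y$ combined with the weight $\psi_B$ produces, after integration by parts, the Kato--type positive quantity $\int(\partial_y\varepsilon_\gamma)^2\varphi'_{i,B}$ (together with a controlled loss proportional to $B^{-2}$ which is absorbed for $B$ large). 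The scaling/transport term $-b_\gamma\Lambda\varepsilon_\gamma$ integrated against $(1+\mathcal{J}^\gamma_{i,j})\varphi_{i,B}\varepsilon_\gamma$ gives the positive contribution $\int\varepsilon_\gamma^2\varphi'_{i,B}$; the specific algebraic choice of $\mathcal{J}^\gamma_{i,j}=(1-J^\gamma_1)^{-4(j-1)-2i}-1$ is dictated precisely by the requirement of cancelling the problematic scaling factor coming from $\partial_s\lambda_\gamma/\lambda_\gamma$ times the leading quadratic piece $\int\varepsilon_\gamma^2\varphi_{i,B}$ (using $(J^\gamma_1)_s\simeq\frac12(\partial_s\lambda_\gamma/\lambda_\gamma)+\dots$ from \eqref{LOL}).

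Next I would control the modulation contributions. Each modulation parameter $(\partial_s\lambda_\gamma/\lambda_\gamma+b_\gamma)$, $(\partial_sx_\gamma/\lambda_\gamma-1)$, $\partial_sb_\gamma$, $\partial_s\omega_\gamma$ pairs with a localized profile term whose scalar product against $\varepsilon_\gamma\varphi_{i,B}$ (or its derivatives) is small by the orthogonality conditions \eqref{OCG} up to exponentially small tails generated by the cutoff $\chi(|b|^\beta y)$; combining with the modulation estimates \eqref{MS1}--\eqref{MS2} and $|\omega_\gamma|\lesssim\gamma$, these produce at worst $b_\gamma^2(\omega_\gamma+|b_\gamma|)^2$ plus terms absorbed by the positive $\int(\partial_y\varepsilon_\gamma)^2\varphi'_{i,B}+\int\varepsilon_\gamma^2\varphi'_{i,B}$. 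The source term $\Psi_{b_\gamma,\omega_\gamma}$ contributes via its scalar product against $\varepsilon_\gamma\varphi_{i,B}$ and $\partial_y\varepsilon_\gamma\psi_B$; using \eqref{23}--\eqref{24} and Cauchy--Schwarz, this yields the claimed right-hand side $b_\gamma^2(\omega_\gamma^2+b_\gamma^2)$. The remainders $R^\gamma_{b_\gamma}(\varepsilon_\gamma)$ and $R^\gamma_{\rm NL}(\varepsilon_\gamma)$ are respectively linear and at least quadratic in $\varepsilon_\gamma$ with coefficients localized near $y=0$; Sobolev embedding $H^1\hookrightarrow L^\infty$ together with the smallness $\|\varepsilon_\gamma\|_{H^1}\ll1$ (from \eqref{MC}--\eqref{EC}) puts them into the positive virial term.

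For the $H^1$--scaled version \eqref{MF2}, I would apply $\partial_s(\cdot/\lambda_\gamma^2)$ and observe that the extra $-2(\partial_s\lambda_\gamma/\lambda_\gamma)\mathcal{F}^\gamma_{i,2}/\lambda_\gamma^2$ term is exactly what the index shift from $\mathcal{F}^\gamma_{i,1}$ to $\mathcal{F}^\gamma_{i,2}$ is designed to absorb through $\mathcal{J}^\gamma_{i,2}$. The novel ingredients specific to the saturation, namely the contributions of $-\omega_\gamma Q_{b,\omega}|Q_{b,\omega}|^{q-1}$ and the new term $\frac{2\omega_\gamma}{q+1}(\cdots)\psi_B$ in $\mathcal{F}^\gamma_{i,j}$, need to be differentiated including the time derivative of $\omega_\gamma$ itself; using $|\partial_s\omega_\gamma|\lesssim m\gamma|\partial_s\lambda_\gamma|/\lambda_\gamma^{m+1}\lesssim |b_\gamma|\omega_\gamma+\dots$ and $|\omega_\gamma|\lesssim\omega^*\ll1$ these extra contributions fit in $b_\gamma^2(\omega_\gamma^2+b_\gamma^2)$.

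Finally, for the coercivity \eqref{COER}, I would rely on the standard coercivity of the linearized operator $L$ at $Q$ under three orthogonality conditions (to $Q$, $\Lambda Q$, $y\Lambda Q$, up to the kernel direction $Q'$ which is killed by the skew-symmetric structure); the orthogonality conditions \eqref{OCG} for $\varepsilon_\gamma$ are against $\mathcal{Q}_{\omega_\gamma}, \Lambda\mathcal{Q}_{\omega_\gamma}, y\Lambda\mathcal{Q}_{\omega_\gamma}$, but since $\mathcal{Q}_{\omega_\gamma}=Q+O(\omega_\gamma)$, a perturbative argument gives the same coercivity with a loss of order $\omega_\gamma\ll1$. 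The bound \eqref{218} on $J^\gamma_i$ and $\mathcal{J}^\gamma_{i,j}$ is then immediate from Cauchy--Schwarz. The main obstacle I anticipate is the careful bookkeeping of the $\omega_\gamma$--dependent corrections: ensuring that all scaling--breaking terms coming from $\partial_s\omega_\gamma\neq 0$ are either absorbed into the defining modification $\mathcal{J}^\gamma_{i,j}$ or controlled by the small factor $\omega_\gamma$, without destroying the exact algebraic cancellations that drive the MMR1 monotonicity.
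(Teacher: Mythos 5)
The paper offers no proof of this proposition: it is imported verbatim from \cite{L3} (Proposition 3.1 there), and the proof given in that reference follows exactly the strategy you outline, namely the Martel--Merle--Rapha\"el monotonicity computation adapted to track the extra $\omega_{\gamma}$-dependence of the profile and of the functional. Your sketch identifies all the essential ingredients in the same way (the role of $\mathcal{J}^{\gamma}_{i,j}$ in absorbing the scaling-breaking contributions of $\partial_s\lambda_{\gamma}/\lambda_{\gamma}$, the control of the modulation and $\Psi_{b_{\gamma},\omega_{\gamma}}$ terms via \eqref{MS1}--\eqref{MS2} and \eqref{23}--\eqref{24}, and the coercivity obtained perturbatively in $\omega_{\gamma}$ from that of $L$), so it is consistent with the source's approach.
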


As a consequence of the modulation estimates introduced in Section \ref{S22} and the monotonicity formulas introduced above, we have the following control on the error term $\varepsilon_{\gamma}(t)$:

\begin{lemma}[{Control of the error term for $u_{\gamma}(t)$, Lemma 4.1 in \cite{L3}}]\label{L0}
We have the following:
\begin{enumerate}
	\item {\rm (Almost monotonicity of the localized $H^1$ norm: there exists a universal constant $K_0>1$, such that for $i=1,2$ and $0\leq s_1<s_2< +\infty$, there holds):}
	\begin{align}
	&\mathcal{N}^{\gamma}_{i}(s_2)+\int_{s_1}^{s_2}\bigg(\int\big[(\partial_y\varepsilon_{\gamma})^2(s)+\varepsilon_{\gamma}^2(s)\big]\varphi'_{i,B}\bigg)\,ds\nonumber\\
	&\leq K_0\bigg[\mathcal{N}^{\gamma}_i(s_1)+\sup_{s\in[s_1,s_2]}|b_{\gamma}(s)|^3+\sup_{s\in[s_1,s_2]}\omega_{\gamma}^3(s)\bigg],\label{219}
    \end{align}
    and
    \begin{align}
	&\frac{\mathcal{N}^{\gamma}_i(s_2)}{\lambda_{\gamma}^2(s_2)}+\int_{s_1}^{s_2}\frac{1}{\lambda_{\gamma}^2(s)}\bigg[\bigg(\int\big[(\partial_y\varepsilon_{\gamma})^2(s)+\varepsilon_{\gamma}^2(s)\big]\varphi'_{i,B}\bigg)+b_{\gamma}^2(s)\big(|b_{\gamma}(s)|+\omega_{\gamma}(s)\big)\bigg]\,ds\nonumber\\
	&\leq K_0\Bigg(\frac{\mathcal{N}^{\gamma}_i(s_1)}{\lambda_{\gamma}^2(s_1)}+\bigg[\frac{b_{\gamma}^2(s_1)+\omega_{\gamma}^2(s_1)}{\lambda_{\gamma}^2(s_1)}+\frac{b_{\gamma}^2(s_2)+\omega_{\gamma}^2(s_2)}{\lambda_{\gamma}^2(s_2)}\bigg]\Bigg).\label{220}
	\end{align}
	\item {\rm (Control of $b_{\gamma}$ and $\omega_{\gamma}$):} for all $0\leq s_1<s_2<+\infty$, there holds:
	\begin{equation}
	\label{221}
	\omega_{\gamma}(s_2)+\int_{s_1}^{s_2}b_{\gamma}^2(s)\,ds\lesssim\mathcal{N}^{\gamma}_1(s_1)+\omega_{\gamma}(s_1)+\sup_{s\in[s_1,s_2]}|b_{\gamma}(s)|,
	\end{equation}
	\item {\rm (Control of $\frac{b_{\gamma}}{\lambda_{\gamma}^2}$):} let $c_1=\frac{m}{m+2}G'(0)>0$, where $G$ is the $C^2$ function introduced in \eqref{LOB}. Then	there exists a universal constant $K_1>1$ such that for all $0\leq s_1<s_2< +\infty$, there holds:
	\begin{align}
	&\bigg|\frac{b_{\gamma}(s_2)+c_1\omega_{\gamma}(s_2)}{\lambda_{\gamma}^2(s_2)}-\frac{b_{\gamma}(s_1)+c_1\omega_{\gamma}(s_1)}{\lambda_{\gamma}^2(s_1)}\bigg|\nonumber\\
	&\leq K_1\bigg(\frac{\mathcal{N}^{\gamma}_1(s_1)}{\lambda_{\gamma}^2(s_1)}+\frac{b_{\gamma}^2(s_1)+\omega_{\gamma}^2(s_1)}{\lambda_{\gamma}^2(s_1)}+\frac{b_{\gamma}^2(s_2)+\omega_{\gamma}^2(s_2)}{\lambda_{\gamma}^2(s_2)}\bigg).\label{222}
	\end{align}
	\item {\rm (Refined control of $\lambda_{\gamma}$):} let $\lambda^{\gamma}_0(s)=\lambda_{\gamma}(s)(1-J^{\gamma}_1(s))^2$. Then there exists a universal constant $K_2>1$ such that for all $s\in[0,+\infty)$,
	\begin{equation}\label{223}
	\bigg|\frac{(\lambda^{\gamma}_0)_s}{\lambda^{\gamma}_0}+b_{\gamma}\bigg|\leq K_2\Big(\mathcal{N}^{\gamma}_1+(|b_{\gamma}|+\omega_{\gamma})\big[(\mathcal{N}^{\gamma}_2)^{\frac{1}{2}}+|b_{\gamma}|\big]\Big).
	\end{equation}
\end{enumerate}
\end{lemma}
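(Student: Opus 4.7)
The plan is to derive each of the four items from the modulation equations of Proposition \ref{P2} and the monotonicity formulas of Proposition \ref{P4} by time-integration, algebraic rearrangement, and the coercivity estimate \eqref{COER}.

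For item (1), I integrate the Lyapunov control \eqref{MF1} (resp.\ its $H^1$-scaling version \eqref{MF2}) from $s_1$ to $s_2$ and invoke the equivalence \eqref{COER} to exchange $\mathcal{F}_{i,j}^{\gamma}$ for $\mathcal{N}_{i}^{\gamma}$. The nonlinear source $b_{\gamma}^2(\omega_{\gamma}^2+b_{\gamma}^2)$ is dominated pointwise by $(\sup|b_{\gamma}|^2+\sup\omega_{\gamma}^2)\,b_{\gamma}^2$, and the time integral of the latter is controlled by \eqref{221}, producing the $\sup|b_{\gamma}|^3+\sup\omega_{\gamma}^3$ error in \eqref{219}. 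For \eqref{220}, the rescaling by $1/\lambda_{\gamma}^2$ lets me pull a factor $|b_{\gamma}|+\omega_{\gamma}\ll 1$ out of the source and absorb it into the gained LHS term $b_{\gamma}^2(|b_{\gamma}|+\omega_{\gamma})/\lambda_{\gamma}^2$, while the boundary contributions $[b_{\gamma}^2+\omega_{\gamma}^2]/\lambda_{\gamma}^2$ at $s_1,s_2$ come from integrating by parts to handle the $\partial_s\lambda_{\gamma}$ factor generated by the rescaling.

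For item (2), I integrate the law of $b_{\gamma}$ \eqref{LOB} from $s_1$ to $s_2$. The $+2b_{\gamma}^2$ coefficient yields the coercive $\int b_{\gamma}^2\,ds$; the drift $\partial_s\omega_{\gamma}G'(\omega_{\gamma})$ integrates to $G(\omega_{\gamma}(s_2))-G(\omega_{\gamma}(s_1))\approx c_0[\omega_{\gamma}(s_2)-\omega_{\gamma}(s_1)]$ since $G(0)=0$ and $G'(0)=c_0>0$, delivering $\omega_{\gamma}(s_2)$ on the LHS; the term $b_{\gamma}(J_2^{\gamma})_s$ integrates by parts to boundary quantities controlled by $(\mathcal{N}_2^{\gamma})^{1/2}$ via \eqref{218}; and the exponentially-localized error $\int\varepsilon_{\gamma}^2e^{-|y|/10}$ is dominated by $\mathcal{N}_1^{\gamma}$ because $e^{-|y|/10}\lesssim\varphi_{1,B}$ once $B>100$.

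For item (3), I use the identity $\tfrac{d}{ds}(\omega_{\gamma}/\lambda_{\gamma}^2)=-(m+2)(\omega_{\gamma}/\lambda_{\gamma}^2)(\partial_s\lambda_{\gamma}/\lambda_{\gamma})$, immediate from $\omega_{\gamma}=\gamma\lambda_{\gamma}^{-m}$. Inserting $\partial_s\lambda_{\gamma}/\lambda_{\gamma}\approx -b_{\gamma}$ from \eqref{MS1} and $G'(\omega_{\gamma})\approx c_0$, the leading pieces of \eqref{LOBOLS} become $\tfrac{d}{ds}(b_{\gamma}/\lambda_{\gamma}^2)\approx-m c_0 b_{\gamma}\omega_{\gamma}/\lambda_{\gamma}^2$, while $\tfrac{d}{ds}(\omega_{\gamma}/\lambda_{\gamma}^2)\approx(m+2)b_{\gamma}\omega_{\gamma}/\lambda_{\gamma}^2$. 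The specific choice $c_1=mc_0/(m+2)$ cancels these exactly in $\tfrac{d}{ds}[(b_{\gamma}+c_1\omega_{\gamma})/\lambda_{\gamma}^2]$, and what remains integrates to the RHS of \eqref{222} by \eqref{220}. For item (4), I logarithmically differentiate $\lambda_0^{\gamma}=\lambda_{\gamma}(1-J_1^{\gamma})^2$ to obtain $\partial_s\lambda_0^{\gamma}/\lambda_0^{\gamma}=\partial_s\lambda_{\gamma}/\lambda_{\gamma}-2(J_1^{\gamma})_s/(1-J_1^{\gamma})$, rearrange \eqref{LOL} as $(1-J_1^{\gamma})\partial_s\lambda_{\gamma}/\lambda_{\gamma}+b_{\gamma}=2(J_1^{\gamma})_s+E$ with $|E|$ bounded by the RHS of \eqref{223}, and substitute to get $\partial_s\lambda_0^{\gamma}/\lambda_0^{\gamma}+b_{\gamma}=-b_{\gamma}J_1^{\gamma}/(1-J_1^{\gamma})+E/(1-J_1^{\gamma})$; the bound $|J_1^{\gamma}|\lesssim(\mathcal{N}_2^{\gamma})^{1/2}$ from \eqref{218} then closes \eqref{223}.

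The main technical subtlety is the circular dependence between items (1) and (2): \eqref{219} uses the time integral $\int b_{\gamma}^2\,ds$ provided by \eqref{221}, while \eqref{221} uses the bound $\int\varepsilon_{\gamma}^2e^{-|y|/10}\lesssim\mathcal{N}_1^{\gamma}$ which ultimately rests on the coercive statement in \eqref{219}. Both must be closed simultaneously, which is standard via a bootstrap within the (Soliton) regime, where the smallness $|b_{\gamma}|+\omega_{\gamma}\ll 1$ ensures every mixed nonlinear term can be absorbed into the LHS.
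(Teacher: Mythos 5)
This lemma is not proved in the paper at all: it is imported verbatim from \cite{L3} (Lemma 4.1 there), and Section \ref{S2} explicitly states that it only collects results to be used later. So there is no in-paper proof to compare against; what can be judged is whether your sketch is a faithful reconstruction of the standard argument from the ingredients the paper does record, namely Propositions \ref{P2} and \ref{P4}. On the whole it is. The key structural points are correctly identified: integrating \eqref{MF1}--\eqref{MF2} and converting $\mathcal{F}^{\gamma}_{i,j}$ to $\mathcal{N}^{\gamma}_i$ via \eqref{COER} for item (1); integrating \eqref{LOB} and using $G(\omega)\approx c_0\omega$ for item (2); the algebraic cancellation $c_1(m+2)=mG'(0)$ between $\tfrac{d}{ds}(\omega_{\gamma}/\lambda_{\gamma}^2)=-(m+2)(\omega_{\gamma}/\lambda_{\gamma}^2)(\partial_s\lambda_{\gamma}/\lambda_{\gamma})$ and the drift $\partial_s\omega_{\gamma}G'(\omega_{\gamma})/\lambda_{\gamma}^2$ in \eqref{LOBOLS} for item (3); and the exact computation $(\lambda_0^{\gamma})_s/\lambda_0^{\gamma}+b_{\gamma}=E/(1-J_1^{\gamma})-b_{\gamma}J_1^{\gamma}/(1-J_1^{\gamma})$ from \eqref{LOL} and \eqref{218} for item (4), which is clean and complete. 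You also correctly flag that (1) and (2) are coupled and must be closed by a simultaneous bootstrap in the regime $|b_{\gamma}|+\omega_{\gamma}\ll1$.

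Two points are loose. First, in \eqref{220} you say the gained left-hand term $\int b_{\gamma}^2(|b_{\gamma}|+\omega_{\gamma})/\lambda_{\gamma}^2$ arises by ``absorbing'' the source $b_{\gamma}^2(\omega_{\gamma}^2+b_{\gamma}^2)/\lambda_{\gamma}^2$ into it; that is backwards. A term on the left must be produced independently --- here from the pointwise law \eqref{LOB} (writing $b_{\gamma}^2\approx-\tfrac12\partial_sb_{\gamma}+\dots$ and integrating by parts against $(|b_{\gamma}|+\omega_{\gamma})/\lambda_{\gamma}^2$, which is where the boundary terms $(b_{\gamma}^2+\omega_{\gamma}^2)/\lambda_{\gamma}^2$ at $s_1,s_2$ actually come from) --- and only then can the strictly smaller source be absorbed into it. Second, in item (2) the bound $\int\varepsilon_{\gamma}^2e^{-|y|/10}\lesssim\mathcal{N}_1^{\gamma}$ is pointwise in $s$, whereas what is needed after integrating \eqref{LOB} in time is $e^{-|y|/10}\lesssim_B\varphi'_{1,B}$ so that the time integral is controlled by the dissipation in \eqref{219}; you use this implicitly but state the wrong (weaker) inequality. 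Neither point breaks the argument, but both should be stated correctly in a written-out proof.
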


\subsection{Asymptotic dynamics in the (Soliton) region for \eqref{CPG}.}\label{S24}
This subsection is devoted to introduce some basic properties for the solution $u(t)$ to \eqref{CP} in the (Blow up) region and the solution $u_{\gamma}(t)$ to \eqref{CPG} in the (Soliton) region.

We fix a $u_0\in\mathcal{A}_{\alpha_0}$, such that the corresponding solution $u(t)$ to \eqref{CP} belongs to the (Blow-up) regime. We denote by $T<+\infty$, the blow-up time. We also let $\gamma<\gamma(u_0,\alpha_0,q)$ small enough, such that the corresponding solution $u_{\gamma}(t)$ to \eqref{CPG} belongs to the (Soliton) regime%
\footnote{This is ensured by Theorem \ref{ST}.}%
.

Now, from Proposition 6.1 in \cite{MMR1}, we have:
\begin{equation}
\label{224}
\tilde{u}(t)\rightarrow u^*\;\text{in }L^2,\;\text{as }t\rightarrow T,
\end{equation}
where 
$$\tilde{u}(t,x)=\frac{1}{\lambda^{1/2}(t)}\varepsilon\bigg(t,\frac{x-x(t)}{\lambda(t)}\bigg).$$
Moreover, there exist a constant $\ell_0=\ell_0(u_0)>0$, such that%
\footnote{See (4.7) in \cite{MMR1}.}
\begin{equation}\label{238}
\lim_{t\rightarrow T}\frac{\lambda(t)}{T-t}=\ell_0>0,\quad\lim_{t\rightarrow T}\frac{b(t)}{(T-t)^2}=\ell^3_0,\quad\lim_{t\rightarrow T}(T-t)^2x(t)=\ell^{-2}_0.
\end{equation}

From (2.27)--(2.28) and (4.54) in \cite{MMR1}, we also have $u^*\in H^1$, satisfying 
\begin{equation}
\label{225}
\|u^*\|_{H^1}\lesssim \delta(\alpha_0)\ll1.
\end{equation}

Next, we let $v(t)$ be the solution to \eqref{CP} with 
$$v(T)=u^*.$$
It is easy to see from Theorem 2.8 in \cite{KPV} and \eqref{32} that $v(t)$ exists globally in time and scatters at both time directions, $i.e.$
$$\exists v^{\pm\infty}\in L^2,\text{ such that }\lim_{t\rightarrow\pm\infty}\|v(t)-e^{-t\partial_x^3}v^{\pm\infty}\|_{L^2}=0,$$
or equivalently
\begin{equation}
\label{226}
\|v\|_{L_x^5L_t^{10}(\mathbb{R})}\lesssim 1.
\end{equation}

From (4.43)--(4.45) in \cite{MMR1}, we know that there exists a $t_1^*<T$ such that for all $t\in[0,t_1^*]$ we have
\begin{gather}
\mathcal{N}_2(t)+\|\varepsilon(t)\|_{H^1}+|b(t)|+|1-\lambda(t)|\lesssim\delta(\alpha_0),\label{227}\\
\int_{y>0}y^{10}\varepsilon^2(t,y)\,dy\leq 5.\label{228}\\
b(t^*_1)\geq 2C^*\mathcal{N}_1(t^*_1),\label{229}
\end{gather}
where $C^*$ is defined as following%
\footnote{Recall that $K_0$, $K_1$ and $K_2$ are defined in Lemma \ref{L0}.}%
:
$$C^*=100(K_0K_2+K_1).$$
Then for $0<\gamma<\gamma(u_0,\alpha_0)$ small enough, we have for all $t\in[0,t_1^*]$ the solution $u_{\gamma}(t)$ satisfies%
\footnote{See (4.52)--(4.55) and (5.18) in \cite{L3}.}%
:
\begin{gather}
\mathcal{N}^{\gamma}_2(t)+\|\varepsilon_{\gamma}(t)\|_{H^1}+|b_{\gamma}(t)|+|1-\lambda_{\gamma}(t)|\lesssim\delta(\alpha_0),\label{230}\\
\int_{y>0}y^{10}\varepsilon_{\gamma}^2(t,y)\,dy\leq 5.\label{231}\\
b_{\gamma}(t^*_1)\geq C^*\mathcal{N}^{\gamma}_1(t^*_1),\label{232}
\end{gather}

Then from Section 4 of \cite{L3}, we know that there exists a $t^*_{2,\gamma}\in(t_1^*,+\infty)$ such that $b_{\gamma}(t_{2,\gamma}^*)=\frac{1}{100}\omega_{\gamma}(t_{2,\gamma}^*)$ and for all $t\in [t_1^*,t_{2,\gamma}^*]$, there holds
\begin{equation}
b_{\gamma}(t)\geq \frac{1}{100}\omega_{\gamma}(t).\label{233}
\end{equation}
We also have for all $t_1^*\leq t_1<t_2\leq t_{2,\gamma}^*$, there holds%
\footnote{See (4.69) in \cite{L3}.}
\begin{equation}
\label{234}
\lambda_{\gamma}(t_2)\leq 2\lambda_{\gamma}(t_1).
\end{equation}
And for all%
\footnote{See (4.71) in \cite{L3}.}
 $t\in [t_1^*,t_{2,\gamma}^*]$,
\begin{equation}
0<\frac{\mathcal{N}_1^{\gamma}(t)}{\lambda_{\gamma}^2(t)}+\frac{\omega_{\gamma}(t)}{\lambda_{\gamma}^2(t)}\lesssim\frac{b_{\gamma}(t)}{\lambda^2_{\gamma}(t)}\sim \ell^*,\label{235}
\end{equation}
where 
$$\ell^*=\frac{b(t_1^*)}{\lambda^2(t_1^*)}>0,$$
independent of $\gamma$.

While for $t\in[t^*_{2,\gamma},+\infty)$, we have%
\footnote{See (4.92)--(4.94) in \cite{L3}.}
\begin{gather}
\lambda_{\gamma}(t)\sim \bigg(\frac{\gamma}{\ell^*}\bigg)^{\frac{1}{m+2}},\quad\omega_{\gamma}(t)\sim \gamma^{\frac{2}{m+2}}(\ell^*)^{\frac{m}{m+2}},\label{236}\\
\frac{\mathcal{N}_1^{\gamma}(t)}{\lambda_{\gamma}^2(t)}+\bigg|\frac{b_{\gamma}(t)}{\lambda^2_{\gamma}(t)}\bigg|\lesssim\frac{\omega_{\gamma}(t)}{\lambda_{\gamma}^2(t)}\sim \ell^*.\label{237}
\end{gather}

Finally, for all $t\in[t^*_1,+\infty)$, we have%
\footnote{See (4.82), (4.93)--(4.95) in \cite{L3}.}%
:
\begin{equation}
\label{240}
\mathcal{N}_2^{\gamma}(t)+|b_{\gamma}(t)|+\omega_{\gamma}(t)\lesssim \delta(\alpha_0)\ll1.
\end{equation}

\section{Continuation after blow-up time}\label{S3}
In this section, we will give the proof of Theorem \ref{MT}, using the analysis tools introduced in Section \ref{S2}.

First, we denote by 
\begin{gather}
\label{31}
\tilde{u}_{\gamma}(t,x)=\frac{1}{\lambda_{\gamma}^{1/2}(t)}\varepsilon_{\gamma}\bigg(t,\frac{x-x_{\gamma}(t)}{\lambda_{\gamma}(t)}\bigg),\\ Q_S^{\gamma}(t,x)=\frac{1}{\lambda_{\gamma}^{1/2}(t)}Q_{b_{\gamma}(t),\omega_{\gamma}(t)}\bigg(\frac{x-x_{\gamma}(t)}{\lambda_{\gamma}(t)}\bigg).\label{32}
\end{gather}

We claim that 
\begin{lemma}\label{L2}The following properties hold true.
\begin{enumerate}
\item  For all $t\geq T$, we have:
\begin{gather}
\frac{\mathcal{N}_{1}^{\gamma}(t)}{\lambda^2_{\gamma}(t)}+|b_{\gamma}(t)|+\omega_{\gamma}(t)+\lambda_{\gamma}(t)\rightarrow0,\;\; \text{as }\gamma\rightarrow0,\label{33}\\
x_{\gamma}(t)\rightarrow+\infty,\;\; \text{as }\gamma\rightarrow0.\label{34}
\end{gather}
\item We have:
\begin{equation}
\label{310}
\lim_{\gamma\rightarrow0}t^*_{2,\gamma}=T.
\end{equation}
\item For all $T_0> T$, we have:
\begin{equation}
\label{35}
\tilde{u}_{\gamma}(t,x)\rightarrow v(t,x)\text{ in } \mathcal{C}([T,T_0],L^2),\;\;\text{as }\gamma\rightarrow0,
\end{equation}
\end{enumerate}
\end{lemma}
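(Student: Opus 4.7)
I prove the assertions in the order (2), (1), (3), each step using its predecessors. For the lower bound $\liminf_{\gamma \to 0} t^*_{2,\gamma} \geq T$, I fix $t_0 < T$; \eqref{239} gives $b_\gamma(t_0) \to b(t_0) > 0$ while $\omega_\gamma(t_0) = \gamma/\lambda_\gamma^m(t_0) \to 0$, so for $\gamma$ small $b_\gamma(t_0) > \omega_\gamma(t_0)/100$, forcing $t_0 \leq t^*_{2,\gamma}$. For the matching upper bound I combine \eqref{235} (giving $b_\gamma/\lambda_\gamma^2 \sim \ell^*$ on $[t_1^*, t^*_{2,\gamma}]$) with the refined scaling law \eqref{223} to obtain $d\lambda_\gamma/dt = -\ell^* + o(1)$ on this interval; integrating with $\lambda_\gamma(t_1^*) \to \lambda(t_1^*)$ and $\lambda_\gamma(t^*_{2,\gamma}) \to 0$ from \eqref{236}, together with the blow-up asymptotics \eqref{238} (which yield $\lambda(t_1^*)/\ell^* = T - t_1^* + o(T - t_1^*)$ as $t_1^* \to T$), gives $t^*_{2,\gamma} \to T$.

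\textbf{Part (1).} For $t \geq T$ fixed, part (2) ensures $t > t^*_{2,\gamma}$ once $\gamma$ is small, so \eqref{236}--\eqref{237} apply and give $\lambda_\gamma(t), \omega_\gamma(t) \to 0$ as well as $|b_\gamma(t)| \lesssim \ell^* \lambda_\gamma^2(t) \to 0$ directly. For $\mathcal{N}_1^\gamma(t)/\lambda_\gamma^2(t) \to 0$, I apply the monotonicity \eqref{220} with $s_1 = T - \delta$ and $s_2 = t$: each of the three terms on the right-hand side vanishes on sending $\gamma \to 0$ first and then $\delta \to 0$, using \eqref{239} for the $\gamma$-limit and the MMR1 bootstrap $\mathcal{N}_2(s) + b(s)^2 \lesssim \lambda(s)^4$ for the $\delta$-limit. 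Finally $x_\gamma(t) \to +\infty$ follows from $\partial_t x_\gamma \approx \lambda_\gamma^{-2}(t) > 0$ (via \eqref{MS1}) and the trivial bound $x_\gamma(t) \geq x_\gamma(T - \delta) \to x(T - \delta) \to +\infty$ from \eqref{238}.

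\textbf{Part (3).} I first establish closeness at an auxiliary time $t_* = T - \delta$: \eqref{239} yields $\tilde u_\gamma(t_*) \to \tilde u(t_*)$ in $L^2$ as $\gamma \to 0$, then \eqref{224} gives $\tilde u(t_*) \to u^*$, and $L^2$-continuity of $v$ at $T$ gives $v(t_*) \to v(T) = u^*$, so by a diagonal argument $\|\tilde u_\gamma(t_*) - v(t_*)\|_{L^2} < \epsilon/2$ for any prescribed $\epsilon > 0$. To propagate this bound on $[t_*, T_0]$, the crucial feature is that $Q_S^\gamma(t, x)$ is a spike of width $\lambda_\gamma(t) \to 0$ centered at $x_\gamma(t) \to +\infty$, hence spatially separated from $v$ in the limit. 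I perform an $L^2$ energy estimate on $w_\gamma := \tilde u_\gamma - v$, exploiting three facts: $\|\tilde u_\gamma(t)\|_{L^2}^2 \to \|u^*\|_{L^2}^2 \ll \|Q\|_{L^2}^2$ (from mass conservation combined with \eqref{OCG} and \eqref{25}), which gives subcritical Gagliardo--Nirenberg control of the quintic nonlinearity; the profile-evolution error for $Q_S^\gamma$ reduces via \eqref{CAP} and the modulation equations to source terms of size $O(b_\gamma^2 + \omega_\gamma) \to 0$; and the saturation term $\gamma \partial_x(u_\gamma |u_\gamma|^{q-1})$ vanishes trivially.

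\textbf{Main obstacle.} The delicate step is the control of the cross terms $\partial_x[(Q_S^\gamma)^k (\tilde u_\gamma)^{5-k}]$ in the propagation of part (3): $Q_S^\gamma$ has $L^\infty$ norm $\sim \lambda_\gamma^{-1/2}$ and $H^1$ norm $\sim \lambda_\gamma^{-1}$, both blowing up as $\gamma \to 0$. I would split the integration into a moving window $\{|x - x_\gamma(t)| \lesssim \lambda_\gamma(t) |\log \lambda_\gamma(t)|\}$, where $Q_S^\gamma$ essentially lives but $v$ and $\tilde u_\gamma$ are $L^2$-small (the former because the window escapes to $+\infty$ and $v \in L^2$, the latter via part (1)), and its complement where $Q_S^\gamma$ decays exponentially. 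This spatial decoupling, trading the concentration of $Q_S^\gamma$ against the smallness of $\tilde u_\gamma$ and $v$ in the respective regions, is the technical heart of the proof.
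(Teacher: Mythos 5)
Parts (1) and (2) of your proposal follow essentially the paper's route (separate the regimes before and after $t^*_{2,\gamma}$, integrate the approximate ODE for $\lambda_\gamma$, use \eqref{220} to kill $\mathcal{N}_1^{\gamma}/\lambda_\gamma^2$), but your upper bound $\limsup_{\gamma\to0}t^*_{2,\gamma}\leq T$ does not close as written. Estimates \eqref{235} and \eqref{313} only give $-\partial_t\lambda_\gamma\in[\ell^*/3,\,3\ell^*]$ up to error terms, not $-\partial_t\lambda_\gamma=\ell^*+o(1)$, and $t_1^*$ is a \emph{fixed} time in the construction, so integrating from $t_1^*$ only yields $t^*_{2,\gamma}\leq t_1^*+C(T-t_1^*)$ with a comparability constant $C$ that need not be close to $1$; you cannot ``let $t_1^*\to T$''. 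The paper repairs exactly this by integrating \eqref{313} from $T-\eta$ to a putative $t_0>T$ with $t^*_{2,\gamma_n}>t_0$, passing to the limit via \eqref{239}, and obtaining $\lambda(T-\eta)\geq\tfrac{1}{10}(t_0-T+\eta)$ for every $\eta>0$, which contradicts $\lambda(T-\eta)\to0$ as $\eta\to 0$; only the \emph{lower} bound on $-\partial_t\lambda_\gamma$ is needed, so the non-sharp constants are harmless. (There is also a boundary issue at $t=T$ in your part (1), since part (2) does not guarantee $T>t^*_{2,\gamma}$; the paper treats $t\in[T,t_2^*]$ separately using the near-monotonicity \eqref{234} and a direct proof that $\lambda_\gamma(T)\to0$.)

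The genuine gap is in part (3). A direct $L^2$ energy estimate on $w_\gamma=\tilde u_\gamma-v$ cannot close for a KdV-type equation: testing the difference equation against $w_\gamma$ produces terms of the form $\int \partial_xw_\gamma\, w_\gamma\, P$ coming from $\partial_x\big((Q_S^\gamma+\tilde u_\gamma)^5-v^5\big)$, and the derivative cannot be absorbed at fixed regularity — this is the standard loss of derivative that forces the gKdV well-posedness and stability theory through Kato local smoothing and Strichartz-type norms. Your moving-window decoupling addresses the size of $Q_S^\gamma$ but not this derivative loss. The paper instead invokes the $L^2$-perturbation theory of \cite{KKSV} (Lemma \ref{L1}, built on the $L_x^5L_t^{10}$ norm), and the actual work of Step 3 is showing that the sources $\mathcal{E}$, $\partial_xF_1(\tilde u_\gamma)$, $\partial_xF_2(\tilde u_\gamma)$ are $\lesssim_{T_0}\delta(\eta)$ in the dual norm $L_x^{5/4}L_t^{10/9}(\mathbb{R}\times[T-\eta,T_0])$. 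This is delicate precisely because each source carries a factor $\lambda_\gamma^{-3}(t)$ from the time rescaling while $\lambda_\gamma\to0$: the paper must trade powers of $b_\gamma$ and $\omega_\gamma$ against powers of $\lambda_\gamma$ using \eqref{221} and \eqref{235}--\eqref{237}, and must establish the integrated bound $\int_{s_1^*}^{+\infty}\mathcal{N}^{\gamma}_{1,\rm loc}/\lambda_\gamma^3\,ds\lesssim1$ via the monotonicity formula \eqref{MF1}. Your sketch asserts the sources are ``of size $O(b_\gamma^2+\omega_\gamma)\to0$'' without confronting the $\lambda_\gamma^{-3}$ weight, which is where the difficulty actually lies.
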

\begin{remark}\label{R1}
From the definition of $Q_{b,\omega}$, it is easy to see that for all $t\geq T$ and $R>0$,
$$\big\|Q_S^{\gamma}(t,\cdot)\big\|_{L^2(|x|<R)}\rightarrow0,\;\; \text{as }\gamma\rightarrow0,$$
which together with Lemma \ref{L2} implies \eqref{15} immediately.
\end{remark}

\begin{proof}

\noindent{\bf Step 1. Proof of \eqref{33} and \eqref{34}.}

First, we claim that 
\begin{equation}
\label{311}
t_2^*:=\liminf_{\gamma\rightarrow0}t_{2,\gamma}^*\geq T.
\end{equation}
Suppose \eqref{311} does not hold. Then there exists a $t_0<T$ and a sequence $\{\gamma_n\}$ such that $\lim_{n\rightarrow+\infty}\gamma_n=0$ and for all $n$ large enough, we have $t^*_{2,\gamma_n}<t_0<T$. From \eqref{236}, we know that
$$\lambda_{\gamma_n}(t_0)\sim \bigg(\frac{\gamma_n}{\ell^*}\bigg)^{\frac{1}{m+2}},$$
which implies that $\lim_{n\rightarrow+\infty}\lambda_{\gamma_n}(t_0)=0$. But from \eqref{239}, we have
$$\lim_{n\rightarrow+\infty}\lambda_{\gamma_n}(t_0)=\lambda(t_0)>0,$$
which leads to a contradiction.

Now we turn to the proof of 
\begin{equation}
\label{312}
\lim_{\gamma\rightarrow0}\lambda_{\gamma}(t)=0,
\end{equation}
for all $t\geq T$. From \eqref{236}, we have for all $t>t_2^*$, and $\gamma>0$ small enough,
$$\lambda_{\gamma}(t)\sim \bigg(\frac{\gamma}{\ell^*}\bigg)^{\frac{1}{m+2}},$$
hence, $\lim_{\gamma\rightarrow0}\lambda_{\gamma}(t)=0$ for all $t>t_2^*$. 

While for $t\in[T,t^*_2]$, thanks to \eqref{234}, we only need to show that $\lim_{\gamma\rightarrow0}\lambda_{\gamma}(T)=0$. Indeed, from \eqref{223} and \eqref{235} we have: for all $t\in[t^*_1,t^*_{2,\gamma}]$
\begin{equation}\label{313}
\frac{\ell^*}{3}-C\frac{\mathcal{N}_1^{\gamma}}{[\lambda_0^{\gamma}]^2}\leq -(\lambda_0^{\gamma})_t\leq 3\ell^*+C\frac{\mathcal{N}_1^{\gamma}}{[\lambda_0^{\gamma}]^2}.
\end{equation}
For all $t_0<T$ close enough to $T$, we integrate \eqref{313} from $t_0$ to $T$ using \eqref{219} and \eqref{220} to obtain
\begin{align*}
|\lambda_0^{\gamma}(T)-\lambda_0^{\gamma}(t_0)|&\lesssim \ell^*(T-t_0)+\int_{t_0}^T\frac{\mathcal{N}_1^{\gamma}}{[\lambda_0^{\gamma}]^2}\,dt\lesssim\ell^*(T-t_0)+\int_{s(t_0)}^{s(T)}\lambda_{\gamma}(s)\mathcal{N}_1^{\gamma}(s)\,ds\\
&\lesssim\ell^*(T-t_0)+\lambda_{\gamma}(t_0)\int_{s^*_1}^{+\infty}\mathcal{N}_{1}^{\gamma}(s)\,ds.
\end{align*}
Since we have for all $t\geq t_1^*$,
$$\bigg|\frac{\lambda_{\gamma}(t)}{\lambda_0^{\gamma}(t)}-1\bigg|\ll1,$$
the above inequalities imply that 
$$\lambda_{\gamma}(T)\lesssim \ell^*(T-t_0)+\lambda_{\gamma}(t_0).$$
Hence, from \eqref{238}, we have
$$\limsup_{\gamma\rightarrow0}\lambda_{\gamma}(T)\lesssim \ell^*(T-t_0)+\lambda(t_0)\lesssim \ell^*(T-t_0).$$
Let $t_0\rightarrow T$, we obtain $\lim_{\gamma\rightarrow0}\lambda_{\gamma}(T)=0$, which implies \eqref{312} immediately.

Next, from \eqref{235} and \eqref{237}, we have
\begin{equation}
\label{314}
|b_{\gamma}(t)|+\omega_{\gamma}(t)\lesssim \ell^*\lambda_{\gamma}^2(t)\rightarrow0,\quad\text{as }\gamma\rightarrow0.
\end{equation}

From \eqref{MS1}, we have $(x_{\gamma})_t\sim \lambda^{-2}_{\gamma}>0$ for all $t\geq0$. Then for all $t_0<T\leq t$, from \eqref{238} we have
\begin{equation*}
\liminf_{\gamma\rightarrow0}x_{\gamma}(t)\geq\liminf_{\gamma\rightarrow0}x_{\gamma}(t_0)=x(t_0)\sim \frac{1}{\ell_0^2(T-t_0)^2}.
\end{equation*}
Let $t_0\rightarrow T$, we obtain \eqref{34} immediately.

Now it only remains to prove
$$\lim_{\gamma\rightarrow0}\frac{\mathcal{N}_1^{\gamma}(t)}{\lambda_{\gamma}^2(t)}=0.$$ 
For all $t_0<T\leq t$, from \eqref{220}, we have
$$\frac{\mathcal{N}_1^{\gamma}(t)}{\lambda_{\gamma}^2(t)}\lesssim \frac{\mathcal{N}_1^{\gamma}(t_0)}{\lambda_{\gamma}^2(t_0)}+\frac{b_{\gamma}^2(t_0)+\omega_{\gamma}^2(t_0)}{\lambda_{\gamma}^2(t_0)}+\frac{b_{\gamma}^2(t)+\omega_{\gamma}^2(t)}{\lambda_{\gamma}^2(t)}$$
From \eqref{235}, \eqref{237} and \eqref{314}, we have
$$\frac{b_{\gamma}^2(t)+\omega_{\gamma}^2(t)}{\lambda_{\gamma}^2(t)}\rightarrow0,\quad \text{as }\gamma\rightarrow0.$$
While from (4.7), (4.12) and (4.54) in \cite{MMR1}, we have:
$$\lim_{\gamma\rightarrow0}\bigg(\frac{\mathcal{N}_1^{\gamma}(t_0)}{\lambda_{\gamma}^2(t_0)}+\frac{b_{\gamma}^2(t_0)+\omega_{\gamma}^2(t_0)}{\lambda_{\gamma}^2(t_0)}\bigg)=\frac{\mathcal{N}_1(t_0)+b^2(t_0)}{\lambda^2(t_0)}=o_{t_0\rightarrow T}(1).$$
Therefore, we obtain 
$$\lim_{\gamma\rightarrow0}\frac{\mathcal{N}_1^{\gamma}(t)}{\lambda_{\gamma}^2(t)}=0,$$ 
which concludes the proof of \eqref{33} and \eqref{34}.\\

\noindent{\bf Step 2. Proof of \eqref{310}}

Due to \eqref{311}, we only need to prove that 
\begin{equation}
\label{315}
t_{2,*}:=\limsup_{\gamma\rightarrow0}t_{2,\gamma}^*\leq T.
\end{equation}
Suppose \eqref{315} does not hold. Then there exists a $t_0>T$ and a sequence $\{\gamma_n\}$ such that $\lim_{n\rightarrow+\infty}\gamma_n=0$ and for all $n$ large enough, we have $t^*_{2,\gamma_n}>t_0>T$.

For all $\eta>0$, we integrate \eqref{313} from $T-\eta$ to $t_0$ to obtain
$$\lambda_{\gamma_n}(T-\eta)-\lambda_{\gamma_n}(t_0)\geq \frac{1}{3}\ell^*(t_0-T+\eta)-\frac{1}{100}\lambda_{\gamma_n}(T-\eta).$$
Let $n\rightarrow+\infty$, using \eqref{239} and \eqref{33}, we have
$$\lambda(T-\eta)\geq \frac{1}{10}(t_0-T+\eta),$$
for all $\eta>0$. This is a contradiction, since we have $\lim_{t\rightarrow T}\lambda(t)=0$. This concludes the proof of \eqref{310}.\\

\noindent {\bf Step 3. Proof of \eqref{35}.}

 We first introduce the following $L^2$-perturbation theory for $L^2$-critical gKdV obtained in \cite{KKSV}:
\begin{lemma}[{$L^2$-perturbation theory, Theorem 3.1 in \cite{KKSV}}]\label{L1}
Let $I$ be an interval containing $0$ and $w\in\mathcal{C}(I,L^2)$ is a solution to \eqref{CP} on $I$ with
$$\|w\|_{L_t^{\infty}L^2_x(I\times\mathbb{R})}+\|w\|_{L_x^{5}L_t^{10}(\mathbb{R}\times I)}<M.$$
Suppose $\tilde{w}\in\mathcal{C}(I,L^2)$ is a solution to the following equation:
$$\partial_t\tilde{w}+\tilde{w}_{xxx}+(\tilde{w}^5)_x=e,$$
with
\begin{align*}
\|w(0)-\tilde{w}(0)\|_{L^2}\leq M',
\end{align*}
and
\begin{align*}
\Big\|e^{-t\partial_x^3}\big(w(0)-\tilde{w}(0)\big)\Big\|_{L_x^5L_t^{10}(\mathbb{R}\times I)}+\|e\|_{L_x^{5/4}L_t^{10/9}(\mathbb{R}\times I)}<\epsilon,
\end{align*}
for some $M'>0$, some $0<\epsilon<\epsilon_0(M,M')$. 
Then we have:
\begin{equation}
\label{39}
\|w-\tilde{w}\|_{L_t^{\infty}L^2_x(I\times\mathbb{R})}+\|w-\tilde{w}\|_{L_x^{5}L_t^{10}(\mathbb{R}\times I)}<C_0(M,M')\epsilon.
\end{equation}
\end{lemma}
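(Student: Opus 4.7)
This is a standard long-time stability statement for the $L^2$-critical gKdV flow; my plan is a finite iteration of short-time perturbation estimates based on the Strichartz and Kato smoothing toolkit for the Airy group, in the spirit of Tao--Visan's mass-critical NLS perturbation theorem adapted to gKdV as in \cite{KKSV}.

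Setting $h = w - \tilde w$, the function $h$ solves
$$\partial_t h + h_{xxx} + (w^5 - \tilde w^5)_x = -e, \qquad h(0) = w(0) - \tilde w(0),$$
and Duhamel's formula applied from any $t_k \in I$ gives
$$h(t) = e^{-(t-t_k)\partial_x^3} h(t_k) - \int_{t_k}^t e^{-(t-s)\partial_x^3} \bigl[(w^5 - \tilde w^5)_x + e\bigr](s)\,ds.$$
The first step is to partition $I = \bigcup_{k=1}^N I_k$ into consecutive sub-intervals on each of which $\|w\|_{L^5_x L^{10}_t(\mathbb{R} \times I_k)} < \eta_0$, where $\eta_0$ is a small universal constant depending only on the linear estimate constants; since $\|w\|_{L^5_x L^{10}_t(\mathbb{R}\times I)} < M$, one has $N \lesssim (M/\eta_0)^{10}$.

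The second step, performed on each $I_k$ via a bootstrap/continuity argument, is to apply the Strichartz and smoothing estimates to the Duhamel representation above. In the resolution norm $S(I_k)$ controlling $L^\infty_t L^2_x \cap L^5_x L^{10}_t$ together with the Kato smoothing component, one obtains
$$\|h\|_{S(I_k)} \leq C\bigl\|e^{-(t-t_k)\partial_x^3}h(t_k)\bigr\|_{L^5_x L^{10}_t} + C\|h(t_k)\|_{L^2} + C\|(w^5-\tilde w^5)_x\|_{L^{5/4}_x L^{10/9}_t} + C\|e\|_{L^{5/4}_x L^{10/9}_t}.$$
Writing $w^5 - \tilde w^5 = h \cdot P(w,\tilde w)$ with $P$ homogeneous of degree four, the Leibniz rule combined with H\"older and the Kato smoothing norm gives
$$\|(w^5-\tilde w^5)_x\|_{L^{5/4}_x L^{10/9}_t(\mathbb{R}\times I_k)} \lesssim \bigl(\|w\|_{L^5_x L^{10}_t(I_k)}^4 + \|h\|_{S(I_k)}^4\bigr)\|h\|_{S(I_k)};$$
choosing $\eta_0$ small and running the bootstrap absorbs the quintic term, yielding
$$\|h\|_{S(I_k)} \leq 2C\bigl[\|e^{-(t-t_k)\partial_x^3}h(t_k)\|_{L^5_x L^{10}_t(I_k)} + \|h(t_k)\|_{L^2} + \|e\|_{L^{5/4}_x L^{10/9}_t(I_k)}\bigr].$$

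The third step is the iteration across the $N$ sub-intervals. Applying a further Strichartz estimate to the Duhamel formula in passing from $t_k$ to $t_{k+1}$, one gets a per-step recursion $\|h(t_{k+1})\|_{L^2} + \|e^{-(t-t_{k+1})\partial_x^3}h(t_{k+1})\|_{L^5_x L^{10}_t(I_{k+1})} \leq C_*\bigl(\|h(t_k)\|_{L^2} + \|e^{-(t-t_k)\partial_x^3}h(t_k)\|_{L^5_x L^{10}_t(I_k)}\bigr) + C_*\epsilon$. Iterating $N$ times gives the final bound $\|h\|_{S(I)} \leq C_*^N \epsilon =: C_0(M,M')\epsilon$; note that $\epsilon_0$ must be chosen small enough in terms of $M'$ so that on the first interval, where $\|h(0)\|_{L^2}$ may be as large as $M'$, the bootstrap is actually triggered by the smallness of $\|e^{-t\partial_x^3}h(0)\|_{L^5_x L^{10}_t}$ and $\|e\|$. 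The main obstacle is Step~2: distributing the $x$-derivative on the quintic against merely $L^2$ data, which requires the sharp pair $\|\partial_x e^{-t\partial_x^3}f\|_{L^\infty_x L^2_t} \lesssim \|f\|_{L^2}$ (Kato smoothing) and $\|e^{-t\partial_x^3}f\|_{L^5_x L^{10}_t} \lesssim \|f\|_{L^2}$ (KPV maximal function) combined in a resolution space that simultaneously accommodates all of them --- precisely the framework built in \cite{KKSV}. Once these linear ingredients are in place, the critical H\"older arithmetic in $L^{5/4}_x L^{10/9}_t$ closes the nonlinear estimate, and the iteration above is routine.
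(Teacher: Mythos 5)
The paper does not actually prove this lemma: it cites Theorem 3.1 of \cite{KKSV}, remarks that the statement differs only slightly, and omits the proof, and your sketch is precisely that standard argument (subdivision of $I$ into $N\lesssim(M/\eta_0)^{10}$ subintervals of small $L^5_xL^{10}_t$ norm, a short-time perturbation estimate in the Kenig--Ponce--Vega resolution space built from the Strichartz, Kato smoothing and maximal function estimates, followed by iteration with constant $C_*^N$). Your outline is correct and identifies the genuine crux --- absorbing the derivative in $(w^5-\tilde w^5)_x$ via the $L^\infty_xL^2_t$ smoothing component of the resolution norm rather than the scattering norm alone --- so it matches the intended proof.
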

\begin{remark}
The statement of Lemma \ref{L1} is slightly different from Theorem 3.1 in \cite{KKSV}, but the proof is exactly the same. We omit the proof here.
\end{remark}

Now we turn to the proof of \eqref{35}. For the remainder term $\tilde{u}_{\gamma}(t)$ with $t\geq T$, direct computation leads to 
\begin{equation*}
\partial_t \tilde{u}_{\gamma}+\big[(\tilde{u}_{\gamma})_{xx}+\tilde{u}_{\gamma}^5\big]_x=-\mathcal{E}-\big[F_1(\tilde{u}_{\gamma})\big]_x-\big[F_2(\tilde{u}_{\gamma})\big]_x,
\end{equation*}
where
\begin{align*}
\mathcal{E}=\frac{1}{\lambda_{\gamma}^{7/2}(t)}\bigg[&-\Psi_{b_{\gamma},\omega_{\gamma}}-(b_{\gamma})_s\frac{\partial Q_{b_{\gamma},\omega_{\gamma}}}{\partial b_{\gamma}}-(\omega_{\gamma})_s\frac{\partial Q_{b_{\gamma},\omega_{\gamma}}}{\partial \omega_{\gamma}}\\
&-\bigg(\frac{\partial_s\lambda_{\gamma}}{\lambda_{\gamma}}+b_{\gamma}\bigg)\Lambda Q_{b_{\gamma},\omega_{\gamma}}-\bigg(\frac{\partial_sx_{\gamma}}{\lambda_{\gamma}}-1\bigg)Q_{b_{\gamma},\omega_{\gamma}}'\bigg]\bigg(\frac{x-x_{\gamma}(t)}{\lambda_{\gamma}(t)}\bigg),
\end{align*}
and
\begin{align*}
&F_1(\tilde{u}_{\gamma})=(Q_S^{\gamma}+\tilde{u}_{\gamma})^5-[Q_S^{\gamma}]^5-\tilde{u}_{\gamma}^5\\
&F_2(\tilde{u}_{\gamma})=-\gamma\big[(Q_S^{\gamma}+\tilde{u}_{\gamma})|Q_S^{\gamma}+\tilde{u}_{\gamma}|^{q-1}-Q_S^{\gamma}|Q_S^{\gamma}|^{q-1}\big].
\end{align*}

For all $\eta>0$ small enough, if $0<\gamma<\gamma(\eta)$ is small enough, we have
\begin{align}
\label{316}
&\|\tilde{u}_{\gamma}(T-\eta)-v(T-\eta)\|_{L^2} \nonumber\\
&\leq \|\tilde{u}_{\gamma}(T-\eta)-\tilde{u}(T-\eta)\|_{L^2}+\|\tilde{u}(T-\eta)-u^*\|_{L^2}+\|v(T-\eta)-u^*\|_{L^2}\nonumber\\
&\lesssim \delta(\eta),
\end{align}
where we use \eqref{224} and the fact that $v(T)=u^*$.

We claim that for all $T_0>T$ and $0<\eta<\eta(T_0)$ small enough, there exists $\gamma(\eta)>0$ such that if $0<\gamma<\gamma(\eta)$, then we have
\begin{equation}
\big\|\mathcal{E}\big\|_{L^{5/4}_xL^{10/9}_t(\mathbb{R}\times[T-\eta,T_0])}+\sum_{i=1}^2\|\partial_xF_i(\tilde{u}_{\gamma})\|_{L_x^{5/4}L_t^{10/9}(\mathbb{R}\times [T-\eta,T_0])}\lesssim_{T_0} \delta(\eta),\label{317}
\end{equation}

Then, we can apply Lemma \ref{L1} to $\tilde{u}_{\gamma}$ and $v$ on the interval $[T-\eta,T_0]$, using \eqref{226}, \eqref{316} and \eqref{317} to obtain
\begin{equation}
\label{318}
\sup_{t\in[T-\eta,T_0]}\|\tilde{u}_{\gamma}(t)-v(t)\|_{L^2}\lesssim_{T_0}\delta(\eta),
\end{equation}
provided that $0<\gamma<\gamma(\eta)$. It is easy to see that \eqref{318} implies \eqref{35} immediately.

Now, it only remains to prove \eqref{317}. First, from \eqref{23}, \eqref{MS1} and \eqref{MS2} we have%
\footnote{Recall that we take $\beta=\frac{3}{4}$ in \eqref{203}.}%
:
\begin{align}
&\big\|\mathcal{E}\big\|^{10/9}_{L^{5/4}_xL^{10/9}_t(\mathbb{R}\times[T-\eta,T_0])}\lesssim \big\|\mathcal{E}\big\|^{10/9}_{L_t^{10/9}L^{5/4}_x([T-\eta,T_0]\times\mathbb{R})}\nonumber\\
&\lesssim\int_{T-\eta}^{T_0}\frac{1}{\lambda_{\gamma}^3(t)}\bigg[\big\|\Psi_{b_{\gamma},\omega_{\gamma}}\big\|_{L^{\frac{5}{4}}}+|(b_{\gamma})_s|\bigg\|\frac{\partial Q_{b_{\gamma},\omega_{\gamma}}}{\partial b_{\gamma}}\bigg\|_{L^{\frac{5}{4}}}+|(\omega_{\gamma})_s|\bigg\|\frac{\partial Q_{b_{\gamma},\omega_{\gamma}}}{\partial \omega_{\gamma}}\bigg\|_{L^{\frac{5}{4}}}\nonumber\\
&\qquad+\bigg|\frac{\partial_s\lambda_{\gamma}}{\lambda_{\gamma}}+b_{\gamma}\bigg|\|\Lambda Q_{b_{\gamma},\omega_{\gamma}}\|_{L^{\frac{5}{4}}}+\bigg|\frac{\partial_sx_{\gamma}}{\lambda_{\gamma}}-1\bigg|\|Q'_{b_{\gamma},\omega_{\gamma}}\|_{L^{\frac{5}{4}}}\bigg]^{\frac{10}{9}}\,dt\nonumber\\
&\lesssim\int_{T-\eta}^{T_0}\frac{1}{\lambda_{\gamma}^3(t)}\Big[(\mathcal{N}_{1,\rm loc}^{\gamma})^{1/2}+|b_{\gamma}|^{\frac{23}{20}}+\omega_{\gamma}|b_{\gamma}|\Big]^{10/9}\,dt\nonumber\\
&\lesssim_{T_0}\int_{T-\eta}^{T_0}\frac{(\omega_{\gamma}|b_{\gamma}|)^{10/9}}{\lambda_{\gamma}^3(t)}\,dt+\int_{T-\eta}^{T}\frac{|b_{\gamma}|^{23/18}}{\lambda_{\gamma}^3(t)}\,dt+\bigg(\int_{T}^{T_0}\frac{|b_{\gamma}|^{115/54}}{\lambda_{\gamma}^5(t)}\,dt\bigg)^{3/5}\nonumber\\
&\quad+\bigg(\int_{T-\eta}^{T_0}\frac{\mathcal{N}^{\gamma}_{1,\rm loc}}{\lambda_{\gamma}^{27/5}(t)}\,dt\bigg)^{5/9}.\label{319}
\end{align}

We estimate all these terms separately. First, from \eqref{235}--\eqref{237}, we have
\begin{multline}
\label{320}
\int_{T-\eta}^{T_0}\frac{(\omega_{\gamma}|b_{\gamma}|)^{10/9}}{\lambda_{\gamma}^3(t)}\,dt\lesssim \int_{T-\eta}^{T_0}(\ell^*)^{3/2}\big[\omega_{\gamma}(t)\big]^{13/18}\,dt\\
\lesssim (T_0-T+\eta)(\ell^*)^{\frac{20m+27}{9(m+2)}}\gamma^{\frac{13}{9(m+2)}}\lesssim_{T_0}\delta(\eta).
\end{multline}

Next, we integrate \eqref{313} from $t$ to $T$ to obtain
$$\lambda_{\gamma}(t)\gtrsim \lambda_{\gamma}(T)+\ell^*(T-t)\gtrsim\ell^*(T-t),$$
for all $t\in[t^*_1,T)$. Together with \eqref{235} and \eqref{237}, we have:
\begin{equation}
\label{321}
\int_{T-\eta}^{T}\frac{|b_{\gamma}|^{\frac{23}{18}}(t)}{\lambda_{\gamma}^3(t)}\,dt\lesssim (\ell^*)^{\frac{23}{18}}\int_{T-\eta}^T\frac{dt}{\lambda_{\gamma}^{4/9}(t)}\lesssim \int_{T-\eta}^T\frac{dt}{\big[\ell^*(T-t)\big]^{4/9}}\lesssim \delta(\eta).
\end{equation}

Then from \eqref{221}, \eqref{236}, \eqref{33} and \eqref{310}, we have:
\begin{align}
\label{322}
\int_{T}^{T_0}\frac{|b_{\gamma}|^{\frac{115}{54}}(t)}{\lambda_{\gamma}^5(t)}\,dt&\lesssim \bigg(\sup_{t\in[T,T_0]}\frac{|b_{\gamma}(t)|^{7/54}}{\lambda_{\gamma}^2(t)}\bigg)\times\int_{s(T)}^{s(T_0)}b_{\gamma}^2(s)\,ds\nonumber\\
&\lesssim\bigg(\sup_{t\in[T,T_0]}\frac{|b_{\gamma}(t)|^{7/54}}{\lambda_{\gamma}^2(t)}\bigg)\times\big[\mathcal{N}_1(T)+\omega_{\gamma}(T)+\sup_{t\in[T,T_0]}|b_{\gamma}(t)|\big]\nonumber\\
&\lesssim\bigg(\sup_{t\in[T,T_0]}|b_{\gamma}(t)|^{7/54}\bigg)\times\bigg(\frac{\mathcal{N}_1^{\gamma}(T)+\omega_{\gamma}(T)}{\lambda_{\gamma}^2(T)}+\sup_{t\in[T,T_0]}\frac{|b_{\gamma}(t)|}{\lambda_{\gamma}^2(t)}\bigg)\nonumber\\
&\lesssim \delta(\eta),
\end{align}
provided that $\gamma<\gamma(\eta)$ small enough. 

Finally, for the term
$$\int_{T-\eta}^{T_0}\frac{\mathcal{N}^{\gamma}_{1,\rm loc}(t)}{\lambda_{\gamma}^{27/5}(t)}\,dt,$$
from \eqref{234}, we have:
\begin{equation}
\label{323}
\int_{T-\eta}^{T_0}\frac{\mathcal{N}^{\gamma}_{1,\rm loc}(t)}{\lambda_{\gamma}^{27/5}(t)}\,dt\lesssim \lambda_{\gamma}^{\frac{3}{5}}(T-\eta)\int_{s^*_1}^{+\infty}\frac{\mathcal{N}^{\gamma}_{1,\rm loc}(s)}{\lambda^3_{\gamma}(s)}\,ds.
\end{equation}
We claim that 
\begin{equation}
\label{324}
\int_{s^*_1}^{+\infty}\frac{\mathcal{N}^{\gamma}_{1,\rm loc}(s)}{\lambda^3_{\gamma}(s)}\,ds\lesssim 1.
\end{equation}

Indeed, from \eqref{MF1}, we have for all $s\in[s^*_1,+\infty)$:
\begin{equation}
\label{325}
\lambda_{\gamma}^3\bigg(\frac{\mathcal{F}^{\gamma}_{2,1}}{\lambda_{\gamma}^3}\bigg)_s+\mu\int\Big((\partial_y\varepsilon_{\gamma})^2+\varepsilon_{\gamma}^2\Big)\varphi'_{2,B}\lesssim_B b_{\gamma}^2(\omega_{\gamma}^2+b_{\gamma}^2)-3\frac{(\lambda_{\gamma})_s}{\lambda_{\gamma}}\mathcal{F}^{\gamma}_{2,1}.
\end{equation}
Recall from (3.21) in \cite{L3}, we have for all $s\in[s^*_1,+\infty)$:
$$\int_{y>0}y^2\varepsilon_{\gamma}^2(s)\lesssim\bigg(1+\frac{1}{\lambda_{\gamma}^{\frac{10}{9}}(s)}\bigg)\big[\mathcal{N}^{\gamma}_{2,\rm loc}(s)\big]^{\frac{8}{9}}.$$
Together with \eqref{MS1}, we have for all $s\in[s^*_1,+\infty)$: 
\begin{align*}
\bigg|\frac{(\lambda_{\gamma})_s}{\lambda_{\gamma}}\mathcal{F}^{\gamma}_{2,1}\bigg|&\lesssim\Big(|b_{\gamma}|+\big[\mathcal{N}^{\gamma}_{1,\rm loc}\big]^{\frac{1}{2}}\Big)\Bigg[\bigg(1+\frac{1}{\lambda_{\gamma}^{\frac{10}{9}}(s)}\bigg)\big[\mathcal{N}^{\gamma}_{2,\rm loc}\big]^{\frac{8}{9}}+\int(\varepsilon_{\gamma})^2_{y}\psi_B\Bigg]\\
&\lesssim b_{\gamma}^4+\delta(\kappa)\int\big(\varepsilon_{\gamma}^2+(\varepsilon_{\gamma})^2_{y}\big)\varphi_{2,B}'.
\end{align*}
Injecting the above estimate into \eqref{325} and integrating from $s^*_1$ to $+\infty$, we obtain:
\begin{equation}\label{326}
\int_{s^*_1}^{+\infty}\frac{1}{\lambda_{\gamma}^3}\bigg(\int\Big((\partial_y\varepsilon_{\gamma})^2+\varepsilon_{\gamma}^2\Big)\varphi'_{2,B}\bigg)\lesssim \frac{\mathcal{N}^{\gamma}_2(s^*_1)}{\lambda^3_{\gamma}(s_1^*)}+\int_{s_1^*}^{+\infty}\frac{b_{\gamma}^2(\omega_{\gamma}^2+b_{\gamma}^2)}{\lambda_{\gamma}^3},
\end{equation} 
where we use \eqref{COER} for the above inequality. From \eqref{221}, \eqref{235}, \eqref{237} and \eqref{240}, we have:
$$\int_{s_1^*}^{+\infty}\frac{b_{\gamma}^2(\omega_{\gamma}^2+b_{\gamma}^2)}{\lambda_{\gamma}^3}\lesssim (\ell^*)^2\int_{s_1^*}^{\infty}b_{\gamma}^2\ll1.$$
From \eqref{230}, we have
$$\frac{\mathcal{N}^{\gamma}_2(s^*_1)}{\lambda^3_{\gamma}(s_1^*)}\lesssim \mathcal{N}^{\gamma}_2(s^*_1)\ll1.$$
Hence, \eqref{326} implies \eqref{324} immediately.

Combining \eqref{319}--\eqref{324}, we have:
\begin{equation}
\label{327}
\big\|\mathcal{E}\big\|_{L^{5/4}_xL^{10/9}_t(\mathbb{R}\times[T-\eta,T_0])}\lesssim_{T_0}\delta(\eta),
\end{equation}
provided that $0<\gamma<\gamma(\eta)$ small enough.

Next, we estimate
$$\|\partial_xF_1(\tilde{u}_{\gamma})\|_{L_x^{5/4}L_t^{10/9}(\mathbb{R}\times [T-\eta,T_0])}.$$
Direct computation leads to 
\begin{align*}
&\|\partial_xF_1(\tilde{u}_{\gamma})\|^{10/9}_{L_x^{5/4}L_t^{10/9}(\mathbb{R}\times [T-\eta,T_0])}\leq \|\partial_xF_1(\tilde{u}_{\gamma})\|^{10/9}_{L_t^{10/9}L_x^{5/4}([T-\eta,T_0]\times\mathbb{R})}\\
&\lesssim\int_{T-\eta}^{T_0}\frac{1}{\lambda_{\gamma}^3(t)}\bigg[\Big\|(\varepsilon_{\gamma})_y\big[(\varepsilon_{\gamma}+Q_{b_{\gamma},\omega_{\gamma}})^4-(\varepsilon_{\gamma})^4\big]\Big\|_{L_x^{\frac{5}{4}}}\\
&\qquad+\Big\|(Q_{b_{\gamma},\omega_{\gamma}})_y\big[(\varepsilon_{\gamma}+Q_{b_{\gamma},\omega_{\gamma}})^4-Q^4_{b_{\gamma},\omega_{\gamma}}\big]\Big\|_{L_x^{\frac{5}{4}}}\bigg]^{\frac{10}{9}}\,dt.\\
&\lesssim\int_{T-\eta}^{T_0}\frac{1}{\lambda_{\gamma}^3(t)}\Big[\big\|(\varepsilon_{\gamma})_yQ^4_{b_{\gamma},\omega_{\gamma}}\big\|_{L_x^{\frac{5}{4}}}+\big\|(\varepsilon_{\gamma})_y\varepsilon_{\gamma}^3Q_{b_{\gamma},\omega_{\gamma}}\big\|_{L_x^{\frac{5}{4}}}\\
&\qquad+\big\|\varepsilon_{\gamma}^4(Q_{b_{\gamma},\omega_{\gamma}})_y\big\|_{L_x^{\frac{5}{4}}}+\big\|\varepsilon_{\gamma}Q^3_{b_{\gamma},\omega_{\gamma}}(Q_{b_{\gamma},\omega_{\gamma}})_y\big\|_{L_x^{\frac{5}{4}}}\Big]^{\frac{10}{9}}\,dt
\end{align*}
Thus from \eqref{201} and \eqref{202}, we have:
\begin{align}
&\|\partial_xF_1(\tilde{u}_{\gamma})\|^{10/9}_{L_x^{5/4}L_t^{10/9}(\mathbb{R}\times [T-\eta,T_0])}\nonumber\\
&\lesssim\int_{T-\eta}^{T_0}\frac{1}{\lambda_{\gamma}^3(t)}\Bigg[\bigg(\int\big[(\varepsilon_{\gamma})^2_y+\varepsilon^2_{\gamma}\big]e^{-\frac{|y|}{10}}\bigg)^{\frac{1}{2}}+|b_{\gamma}|^{4-\frac{4\beta}{5}}\|\varepsilon_{\gamma}\|_{H^1}\nonumber\\
&\qquad+|b_{\gamma}|^{1-\frac{3\beta}{10}}\big(\|\varepsilon_{\gamma}\|_{L^{\infty}}^3\|\varepsilon_{\gamma}\|_{L^2}+\|(\varepsilon_{\gamma})_y\|_{L^2}\|\varepsilon_{\gamma}\|^3_{L^{\infty}}\big)\Bigg]^{\frac{10}{9}}\,dt.\label{328}
\end{align}
From \eqref{MC} and \eqref{EC}, we have for all $s\in[0,+\infty)$
$$\|\varepsilon_{\gamma}(s)\|_{H^1}+\frac{\|(\varepsilon_{\gamma})_y(s)\|_{L^2}}{\lambda_{\gamma}(s)}\lesssim \delta(\alpha_0)\ll1.$$
Together with \eqref{235}, \eqref{237}, \eqref{33}, and the fact that $\beta=\frac{3}{4}$, we have:
\begin{align}
\label{329}
&\int_{T-\eta}^{T_0}\frac{1}{\lambda_{\gamma}^3(t)}\Big[|b_{\gamma}|^{4-\frac{4\beta}{5}}\big(\|\varepsilon_{\gamma}\|_{H^1}\big)+|b_{\gamma}|^{1-\frac{3\beta}{10}}\|\varepsilon_{\gamma}\|_{L^{\infty}}^3\|\varepsilon_{\gamma}\|_{H^1}\Big]^{\frac{10}{9}}\,dt\nonumber\\
&\lesssim\int_{T-\eta}^{T_0}\frac{1}{\lambda_{\gamma}^3(t)}\Big[|b_{\gamma}(t)|^{\frac{34}{9}}+|b_{\gamma}(t)|^{\frac{31}{36}}\big\|(\varepsilon_{\gamma})_y\big\|_{L^2}^{\frac{5}{3}}\Big]\,dt\nonumber\\
&\lesssim \int_{T-\eta}^{T_0}\Big[(\ell^*)^{\frac{3}{2}}|b_{\gamma}(t)|^{\frac{41}{18}}+(\ell^*)^{\frac{2}{3}}|b_{\gamma}(t)|^{\frac{7}{36}}\Big]\,dt\lesssim_{T_0} \delta(\eta),
\end{align}
provided that $0<\gamma<\gamma(\eta)$ small enough. Then, from \eqref{323} and \eqref{324}, we have
\begin{equation}
\label{330}
\int_{T-\eta}^{T_0}\frac{1}{\lambda_{\gamma}^3(t)}\bigg(\int\big[(\varepsilon_{\gamma})^2_y+\varepsilon^2_{\gamma}\big]e^{-\frac{|y|}{10}}\bigg)^{\frac{1}{2}}\lesssim_{T_0}\Bigg[\int_{T-\eta}^{T_0}\frac{\mathcal{N}^{\gamma}_{1,\rm loc}(t)}{\lambda_{\gamma}^{27/5}(t)}\,dt\Bigg]^{\frac{5}{9}}\lesssim_{T_0}\delta(\eta).
\end{equation}
Combining \eqref{328}--\eqref{330}, we obtain that
\begin{equation}
\label{331}
\|\partial_xF_1(\tilde{u}_{\gamma})\|_{L_x^{5/4}L_t^{10/9}(\mathbb{R}\times [T-\eta,T_0])}\lesssim_{T_0}\delta(\eta).
\end{equation}

Finally, we estimate the term
$$\|\partial_xF_2(\tilde{u}_{\gamma})\|_{L_x^{5/4}L_t^{10/9}(\mathbb{R}\times [T-\eta,T_0])}.$$
Following from similar arguments, we have
\begin{align}
\label{335}
&\|\partial_xF_2(\tilde{u}_{\gamma})\|^{10/9}_{L_x^{5/4}L_t^{10/9}(\mathbb{R}\times [T-\eta,T_0])}\leq \|\partial_xF_2(\tilde{u}_{\gamma})\|^{10/9}_{L_t^{10/9}L^{5/4}_x([T-\eta,T_0]\times\mathbb{R})}\nonumber\\
&\lesssim\gamma^{10/9}\int_{T-\eta}^{T_0}\frac{1}{[\lambda_{\gamma}(t)]^{(5q+2)/9}}\bigg[\Big\|(Q_{b_{\gamma},\omega_{\gamma}})_y\big[|\varepsilon_{\gamma}+Q_{b_{\gamma},\omega_{\gamma}}|^{q-1}-|Q_{b_{\gamma},\omega_{\gamma}}|^{q-1}\big]\Big\|_{L_x^{\frac{5}{4}}}\nonumber\\
&\qquad+\Big\|(\varepsilon_{\gamma})_y\big|\varepsilon_{\gamma}+Q_{b_{\gamma},\omega_{\gamma}}\big|^{q-1}\Big\|_{L_x^{\frac{5}{4}}}\bigg]^{\frac{10}{9}}\,dt\nonumber\\
&\lesssim\gamma^{10/9}\int_{T-\eta}^{T_0}\frac{1}{[\lambda_{\gamma}(t)]^{(5q+2)/9}}\bigg[|b_{\gamma}|^{1-\frac{4\beta}{5}}\big\||\varepsilon_{\gamma}|^{q-1}\big\|_{L_x^{\infty}}+[\mathcal{N}_{1,\rm loc}^{\gamma}]^{\frac{1}{2}}+|b_{\gamma}|^{q-1-\frac{3\beta}{10}}\nonumber\\
&\qquad+\|(\varepsilon_{\gamma})_y\|_{L^2}\big\||\varepsilon_{\gamma}|^{q-1}\big\|_{L_x^{\frac{10}{3}}}\bigg]^{\frac{10}{9}}\,dt.
\end{align}
We use \eqref{MC}, \eqref{EC} and the Sobolev embedding
$$\|\varepsilon_{\gamma}\|^2_{L^{\infty}}\lesssim\|\varepsilon_{\gamma}\|_{L^2}\|(\varepsilon_{\gamma})_y\|_{L^2}$$
again to estimate:
\begin{align}
\label{332}
&\int_{T-\eta}^{T_0}\frac{\gamma^{10/9}}{[\lambda_{\gamma}(t)]^{\frac{5q+2}{9}}}\bigg[|b_{\gamma}|^{1-\frac{4\beta}{5}}\big\||\varepsilon_{\gamma}|\big\|^{q-1}_{L_x^{\infty}}+|b_{\gamma}|^{q-1-\frac{3\beta}{10}}+\|(\varepsilon_{\gamma})_y\|_{L^2}\big\||\varepsilon_{\gamma}|^{q-1}\big\|_{L_x^{\frac{10}{3}}}\bigg]^{\frac{10}{9}}\,dt.\nonumber\\
&\lesssim\int_{T-\eta}^{T_0}\frac{\gamma^{10/9}}{[\lambda_{\gamma}(t)]^{\frac{5q+2}{9}}}\Big[[\lambda_{\gamma}(t)]^{\frac{5q-3}{10}}+[\lambda_{\gamma}(t)]^{\frac{40q-49}{20}}+[\lambda_{\gamma}(t)]^{\frac{5q+2}{10}}\Big]^{\frac{10}{9}}\,dt\nonumber\\
&\lesssim_{T_0}\delta(\eta),
\end{align}
provided that $0<\gamma<\gamma(\eta)$ is small enough.

Now it only remains to estimate
$$\gamma^{10/9}\int_{T-\eta}^{T_0}\frac{[\mathcal{N}_{1,\rm loc}^{\gamma}(t)]^{\frac{5}{9}}}{[\lambda_{\gamma}(t)]^{(5q+2)/9}}\,dt.$$
Recall that 
$$\omega_{\gamma}(t)=\frac{\gamma}{\lambda_{\gamma}^{\frac{q-5}{2}}(t)}.$$
Together with \eqref{221} and \eqref{234}--\eqref{237}, we have 
\begin{align}
\label{333}
&\gamma^{10/9}\int_{T-\eta}^{T_0}\frac{[\mathcal{N}_{1,\rm loc}^{\gamma}(t)]^{\frac{5}{9}}}{[\lambda_{\gamma}(t)]^{(5q+2)/9}}\,dt\lesssim\int_{T-\eta}^{T_0}[\omega_{\gamma}(t)]^{10/9}\frac{[\mathcal{N}_{1,\rm loc}^{\gamma}(t)]^{\frac{5}{9}}}{[\lambda_{\gamma}(t)]^{3}}\,dt\nonumber\\
&\lesssim\int_{T-\eta}^{T_0}\frac{[\mathcal{N}_{1,\rm loc}^{\gamma}(t)]^{\frac{5}{9}}}{[\lambda_{\gamma}(t)]^{7/9}}\,dt\lesssim_{T_0}\Bigg[\int_{T-\eta}^{T_0}\frac{\mathcal{N}_{1,\rm loc}^{\gamma}(t)}{[\lambda_{\gamma}(t)]^{7/5}}\,dt\Bigg]^{\frac{5}{9}}\nonumber\\
&\lesssim_{T_0} [\lambda_{\gamma}(T-\eta)]^{\frac{8}{9}}\Bigg[\int_{s^*_1}^{+\infty}\mathcal{N}_{1,\rm loc}^{\gamma}(s)\,ds\Bigg]^{\frac{5}{9}}\lesssim_{T_0}\delta(\eta),
\end{align}
provided that $0<\gamma<\gamma(\eta)$ is small enough.

Combining \eqref{335}--\eqref{333}, we have
\begin{equation}
\label{334}
\|\partial_xF_2(\tilde{u}_{\gamma})\|_{L_x^{5/4}L_t^{10/9}(\mathbb{R}\times [T-\eta,T_0])}\lesssim_{T_0}\delta(\eta),
\end{equation}
which together with \eqref{327} and \eqref{331} implies \eqref{317} immediately.

Therefore we conclude the proof of \eqref{35} hence, the proof of Lemma \ref{L2}. 
\end{proof}

Recall from Remark \ref{R1}, we complete the proof of \eqref{15}. 

Now it only remains to prove \eqref{16}.  From the definition of $u_{\rm ext}(t)$, it is easy to see that \eqref{16} holds true for all $t<T$. If $t\geq T$, from Lemma \ref{L2}, we have:
\begin{equation}\label{336}
Q^{\gamma}_S(t,\cdot)\rightarrow 0\;\text{in }H^1_{\rm loc},\;\text{as }\gamma\rightarrow0^+.
\end{equation}
For all $z(t,x)\in\mathcal{C}_0^{\infty}(\mathbb{R}\times\mathbb{R})$, injecting \eqref{35}, \eqref{336} into the following equation 
\begin{align*}
&\int_{\mathbb{R}}u_{\gamma}(t,x)z(t,x)\,dx-\int_{\mathbb{R}}u_0(x)z(0,x)\,dx\nonumber\\
&=\int_0^t\bigg\{\int_{\mathbb{R}}u_{\gamma}(s,x)\partial_tz(s,x)\,dx+\int_{\mathbb{R}}u_{\gamma}(s,x)\partial_x^3z(s,x)\,dx\nonumber\\
&\qquad+\int_{\mathbb{R}}\bigg[u^5_{\gamma}(s,x)-\gamma u_{\gamma}|u_{\gamma}|^{q-1}(s,x)\bigg]\partial_xz(s,x)\,dx\bigg\}\,ds,
\end{align*}
we obtain \eqref{16} immediately, which concludes the proof of Theorem \ref{MT}.

\bibliographystyle{amsplain}
\bibliography{ref}
\end{document}